\newtheorem{theorem}{Theorem}
\numberwithin{theorem}{section}
\newtheorem{corollary}[theorem]{Corollary}
\newtheorem{lemma}[theorem]{Lemma}
\newtheorem{proposition}[theorem]{Proposition}
\theoremstyle{remark}
\newcommand{\R}{\mathbb{R}}
\newcommand{\C}{\mathbb{C}}
\newcommand{\Z}{\mathbb{Z}}
\newcommand{\HH}{\mathbb{H}}
\newcommand{\E}{\mathbb{E}}
\newcommand{\Gamm}{\mathcal C}
\def \interior {{\hbox {int}}}
\def\H{\mathbb{H}}
\def\U{\mathbb{U}}
\def\diam{\mathop{\mathrm{diam}}}
\def\Im{{\rm Im}\,}
\def\SLEkk#1/{$\mathrm{SLE}_{#1}$}
\def\SLEk/{\SLEkk{\kappa}/}
\def\SLEtwo/{\SLEkk2/}
\def\SLE/{$\mathrm{SLE}$}
\def\CLEkk#1/{$\mathrm{CLE}_{#1}$}
\def\CLEk/{\CLEkk{\kappa}/}
\def\CLEtwo/{\CLEkk2/}
\def\CLE/{$\mathrm{CLE}$}
\def\GLEkk#1/{$\mathrm{GLE}_{#1}$}
\def\GLEk/{\GLEkk{\kappa}/}
\def\GLEtwo/{\GLEkk2/}
\def\GLE/{$\mathrm{GLE}$}
\def\SLEkr/{$\mathrm{SLE}_{\kappa, \rho}$}
\def\Ito/{It\^o}
\def \eps {\varepsilon}
\def \P {{\bf P}}
\def \E {{\bf E}}
\def\hcap{\mathrm{hcap}}
\def\diam{\mathrm{diam}}
\title{Conformal loop ensembles: Construction via loop-soups}
\author{{\sc Scott Sheffield\thanks{Massachusetts Institute of Technology. Partially supported by NSF grants DMS 0403182 and DMS 0645585 and
OISE 0730136.}}\, and
{\sc Wendelin Werner\thanks{Universit\'e Paris-Sud 11 and Ecole Normale Sup\'erieure. Research supported in part by ANR-06-BLAN-00058}
}
}
\date{}
\begin{document}
\maketitle
\begin{abstract}
The two-dimensional Brownian loop-soup is a Poissonian random collection of loops in a planar domain with an intensity parameter $c$.  When $c \leq 1$, we show that the outer boundaries of the loop clusters are disjoint simple loops (when $c>1$ there is a.s.\ only one cluster) that satisfy certain conformal restriction axioms.  We prove various results about loop-soups, cluster sizes, and the $c=1$ phase transition.


Combining this with the results of another paper of ours on the Markovian characterization of simple {\em conformal loop ensembles} (CLE), this proves that these outer boundaries of clusters of Brownian loops are in fact SLE$_\kappa$ loops
for $\kappa$ in  $(8/3, 4]$. More generally, it
completes the proof of the fact that the following three descriptions of simple CLEs (proposed in earlier works by the authors)
are equivalent:
\begin{enumerate}
\item The random loop ensembles traced by branching Schramm-Loewner Evolution (SLE$_\kappa$) curves for $\kappa$ in $(8/3, 4]$.
\item The outer-cluster-boundary ensembles of Brownian loop-soups for $c \leq 1$.
\item The (only) random loop ensembles satisfying the {\em conformal restriction axioms}.
\end{enumerate}
\end{abstract}

\section{Introduction}
\subsection{Overview}
What are the natural ``random loop ensemble'' analogs of the Schramm-Loewner evolution SLE$_\kappa$ when the paths are simple (i.e., $\kappa \leq 4$)?
This is a natural question, as such families of loops should describe
the scaling limits of the collection of interface loops appearing in various critical two-dimensional lattice models in statistical physics.

One way to address the above question is to use the SLE curves themselves.  In this approach, which appears in \cite{Sh}, one chooses a point $R$ on the boundary of a simply connected domain $D$ and launches from $R$ a branching SLE$_{\kappa, \kappa-6}$ process (for
$\kappa$ in $(8/3,4]$). The SLE$_{\kappa, \kappa -6}$ process is a natural variant of SLE$_\kappa$ with a target-independence property (see \cite {SchW}) that allows one to construct a branching tree of such processes in a canonical way. Using this ``exploration tree'', one can define a random collection of loops, which was called a {\em conformal loop ensemble} with parameter $\kappa$ (CLE$_\kappa$) in \cite {Sh}. These loop ensembles are the conjectured scaling limits for various random
models from statistical physics.  For example, CLE$_3$ appears as the scaling limit of spin cluster boundaries
of the critical Ising model (see \cite {Sm, ChSm}).

We note that some features of these CLE$_\kappa$ loop-ensembles are not easy to derive via this branching SLE
 definition. For instance, the general question of whether their law is independent of the choice of the starting point $R$ (the ``root'' of the branching-tree)
i.e., whether these CLE$_\kappa$ are invariant under all conformal maps from $D$ onto itself, remained open.

A completely different approach, proposed in \cite {MR2023758}, is based on the Brownian loop-soup. The Brownian loop-soup is a random Poissonian collection of Brownian loops in a domain $D$ that inherits nice properties from Brownian motion, including conformal invariance. It was introduced in \cite {LW}, motivated by questions concerning restriction properties of SLE processes in \cite {LSWr}. In a loop-soup, the Brownian loops can intersect and therefore create clusters. The outer boundaries of these clusters are random conformally invariant curves.  The way the laws of these curves vary when the boundary of the domain changes is related to the way the laws of SLE curves vary (these are the
``restriction properties'' of the curves), and it is therefore natural to think they are actually SLE-type loops.

The authors of the present paper recently showed in \cite{sheffieldwerner1}
that any random collection of loops satisfying the so-called {\em conformal restriction axioms} must be one of the CLE$_\kappa$ defined via the branching
SLE$_{\kappa, \kappa-6}$ tree with $\kappa$ in $(8/3,4]$ (and that it could also be constructed via a Poisson point process of SLE$_\kappa$ excursions).  In that paper, the term CLE was used to refer to any simple loop ensemble satisfying the conformal restriction axioms.  There, as here, the scope was limited to simple loop ensembles (where loops are disjoint and simple loops).
 It was not shown in either \cite{sheffieldwerner1} or \cite{Sh} (and remained far from obvious) that the CLE$_\kappa$ actually satisfy the axioms (in part because of the starting-point independence problem). In other words, it was proved that {\em if} any CLE exist, then they must have the form CLE$_\kappa$ for some $\kappa$ in $(8/3,4]$, but it was not proved that any CLE exist, so the result was a rather conditional one. However, it becomes very useful when combined with the results of the present paper.

Here, we will study properties of the Brownian loop-soup clusters. We will describe properties of its ``subcritical phase'' and of its phase transition:
a property of the decay rate of the size of clusters at the critical point, and the fact that the collection of ``loop-soup cluster outer boundaries'' satisfies the conformal restriction axioms introduced in \cite{sheffieldwerner1}.
This  will allow us to apply the results of that paper to deduce that these loup-soup boundaries are CLE$_\kappa$'s. Furthermore, this enables us to identify precisely the phase transition point of the loop-soup percolation, and to show that all CLE$_\kappa$'s for $\kappa \in (8/3, 4]$ can be constructed via Brownian loop-soups.
This therefore completes the program of \cite{MR2023758, Sh, sheffieldwerner1}, and in particular implies that the CLE$_\kappa$ {\em do} in fact satisfy the conformal restriction axioms and that they are root-independent for all $\kappa$ in $(8/3, 4]$.

Hence, we now have two remarkably different explicit constructions of these conformal loop ensembles CLE$_\kappa$ for each $\kappa$ in $(8/3,4]$.
This is useful, since many properties that seem very mysterious from one perspective are easy from the other.  For example, the (expectation) fractal dimensions of the individual loops and of the set of points not surrounded by any loop can be explicitly computed with SLE tools \cite{MR2491617}, while many monotonicity results and FKG-type correlation inequalities are immediate from the loop-soup construction \cite{MR2023758}. One illustration of the interplay between these two approaches is already present in this paper: One can use SLE tools to determine exactly the value of the critical intensity that separates the two percolative phases of the Brownian loop-soup (and to our knowledge, this is the only self-similar percolation model where this is the case).

Another consequence of this identity between CLE$_\kappa$ and Brownian loop soup cluster boundaries, is the ``additivity property'' of CLE's that we will describe in more detail later in the introduction: The union of two independent CLE's defines another CLE.

The introductions of the papers \cite{MR2023758, Sh, sheffieldwerner1} contain additional history and motivation, and we refer the interested reader to them.
Let us also mention that the present paper gives an interpretation of the {\em central charge} $c$ of models (as usually defined in theoretical physics) for $c$ in
$(0,1]$
as the intensity of the underlying Brownian loop-soup.

\subsection{Notation and axioms}

\subsubsection {Set-up}
The {Brownian loop-soup}, as defined in \cite{LW}, is a Poissonian random countable collection
of Brownian loops contained within a fixed simply-connected domain $D$.  We will actually only need to
consider the outer boundaries of the Brownian loops, so we will take the perspective that a loop-soup is
a random countable collection of simple loops (outer boudaries of Brownian loops
 can be defined as SLE$_{8/3}$ loops, see \cite {MR2350053}).
Let us stress that the {conformal loop ensembles} CLE$_\kappa$, with $\kappa \in (8/3,4]$,
are also random collections of simple loops, but that, unlike the loops of the Brownian loop-soup, the
loops in a CLE are almost surely disjoint from one another.

Random collections of loops can be formally defined using the notational framework of \cite{sheffieldwerner1},
which we briefly review here:  a { simple loop} in the complex plane $\C$ is the image of the unit circle in $\C$ under a continuous injective map. A simple loop $\gamma$ separates $\C$ into two connected components that we call its interior $\interior (\gamma)$ (the bounded one) and its exterior (the unbounded one). We endow the space ${\cal L}$ of simple loops with the $\sigma$-field $\Sigma$ generated by all the events of the type
$ \{ O \subset \interior (\gamma) \}$ when $O$ spans the set of open sets in the unit plane. Note that this $\sigma$-field is also generated by the events of the type $\{ x \in \interior ( \gamma ) \}$ where $x$ spans a countable dense subset $Q$ of the plane (recall that we are considering simple loops so that $O \subset \interior ( \gamma)$ as soon as $O \cap Q \subset \interior (\gamma)$).  It is not hard to see that
$\Sigma$ is equivalent to the Borel $\sigma$-algebra of the Hausdorff metric on ${\cal L}$.  Thus, the following extension is also natural:
let $\overline {\cal L}$ be the set of {\em all} closed bounded subsets of $\C$ and $\overline \Sigma$ the Borel $\sigma$-algebra of the Hausdorff metric on $\overline {\cal L}$.

A countable collection $\Gamma= (\gamma_j , j \in J)$ of simple loops can be identified with a
point-measure $\mu_\Gamma= \sum_{j \in J} \delta_{\gamma_j}$.
The space $\Omega$ of countable collections of elements of $\mathcal L$ is naturally equipped with the $\sigma$-field $\mathcal F$
generated by the sets $\{ \Gamma \ : \ \# ( \Gamma \cap A) = k \} = \{ \Gamma \ : \ \mu_\Gamma (A) = k \}$, where $A \in \Sigma$.
Similarly, there is an obvious extension: the space $\overline \Omega$ of countable collections of elements of $\overline {\mathcal L}$  is naturally equipped with the $\sigma$-field $\overline {\mathcal F}$
generated by the sets $\{ \Gamma \ : \ \# ( \Gamma \cap A) = k \} = \{ \Gamma \ : \ \mu_\Gamma (A) = k \}$, where $A \in \overline \Sigma$.

\subsubsection{Brownian loop-soups}
The loops of the {\em Brownian loop-soup}  $\Gamma = (\gamma_j, j \in J)$ in the unit disk $\U$
 are the points of a Poisson point process with intensity
$c \mu$, where $c$ is an {\em intensity} constant, and $\mu$ is the {\em Brownian loop measure in $\U$} on $({\cal L}, \Sigma)$.
The Brownian loop-soup measure $\mathbb P = \mathbb P_c$
is the corresponding probability measure on $(\Omega, \mathcal F)$.
When $A$ and $A'$ are two closed bounded subsets of a bounded domain $D$, we denote by
$L(A,A'; D)$ the $\mu$-mass of the set of loops that intersect both sets $A$ and $A'$, and stay in $D$.
When the distance between $A$ and $A'$ is positive, this mass is finite \cite{LW}.
Similarly, for each fixed positive $\epsilon$, the set of loops that stay in the bounded domain $D$ and have diameter
larger than $\epsilon$, has finite mass for $\mu$.

The conformal restriction property of the Brownian loop measure $\mu$ (which in fact characterizes the measure up to a multiplicative constant; see \cite{MR2350053}) implies the following two facts (which are essentially the only features of the Brownian loop-soup that we shall use in the present paper):
\begin{enumerate}
\item Conformal invariance: The measure $\mathbb P_c$ is invariant under any Moebius transformation of the unit disc onto itself. This invariance makes it in fact possible to define the law
$\mathbb P_D$ of the loop-soup in any simply connected domain $D \not= \C$ as the law of the image of $\Gamma$ under any given conformal map $\Phi$ from $\U$ onto $D$ (because the law of this image does not depend on the actual choice of $\Phi$).
\item Restriction: If one restricts a loop-soup in $\U$ to those loops that stay in
a simply connected domain $U \subset \U$, one gets a sample of $\mathbb P_U$.
\end{enumerate}
We will work with the usual definition (i.e., the usual normalization) of the measure $\mu$ (as in \cite {LW} --- note that there is often some confusion about a factor $2$ in the definition, related to whether one keeps track of the orientation of the Brownian loops or not). Since we will be talking about some explicit values of $c$ later, it is important to specify this normalization. For a direct definition of the measure $\mu$ in terms of Brownian loops, see \cite {LW}.

\vskip 5cm

\begin {figure}[htbp]
\hskip 4cm
\includegraphics [width=1.6in]{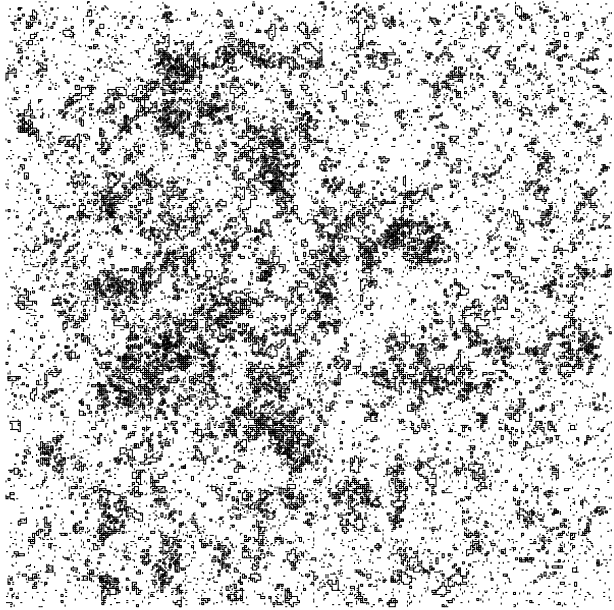}
\caption {Sample of a random-walk loop-soup approximation \cite {LTF} of a Brownian loop-soup in a square, by Serban Nacu}
\end {figure}

\subsubsection{Conformal restriction axioms}
We now describe the axioms introduced in \cite{sheffieldwerner1}.
We say that $\overline \Gamma = (\gamma_j , j \in J)$ is a simple loop configuration in the unit disc $\U$ if
it is a collection of simple loops in $\U$ that are {\em disjoint}
(for all $j \not= j' \in J$, $\gamma_j \cap  \gamma_{j'} = \emptyset$) and {\em non-nested} (for all $j \not= j'$ in $J$,
 $\gamma_j \not \subset \interior(\gamma_{j'})$) and that is {\em locally finite} (for each $\eps >0$, only finitely many loops $\gamma_j$ have a diameter greater than $\eps$).

We say that a non-empty random simple loop configuration $\overline \Gamma = ( \gamma_j , j \in J)$
 in the unit disc $\U$ (or rather its law $P_\U$) satisfies the {\em conformal
 restriction axioms} (introduced in \cite{sheffieldwerner1}) if the following hold:
\begin{enumerate}
\item {Conformal invariance:}  The law of $\overline \Gamma$ is invariant under any conformal transformation from $\U$ onto itself. As for the loop-soup,
this enables us to define the law $P_D$ of the loop-ensemble in any simply connected domain $D \not= \C$. We can also define $P_D$ if $D$ has several connected components by taking independent samples in each of the connected components.
\item {Restriction:} We want to describe the law of the loops on the loop-ensemble that stay in a simply connected subset $U$ of $\U$. To state this natural property for random families $\overline \Gamma$ of {\em disjoint loops}, we need to introduce some notation. Define $U^*$ to be the set obtained by removing from $U$ all the loops and interiors of
loops of $\overline \Gamma$ that do not stay in $U$.
This set $ U^*$ is an open subset of $\U$ (because of the local finiteness condition). The restriction property is that (for all $U$), given $U^*$,
the conditional law of the set of loops in $\overline \Gamma$ that {\em do} stay in $U$ (and therefore in $U^*$) is $P_{U^*}$.
\begin {figure}[htbp]
\begin {center}
\includegraphics [width=2in]{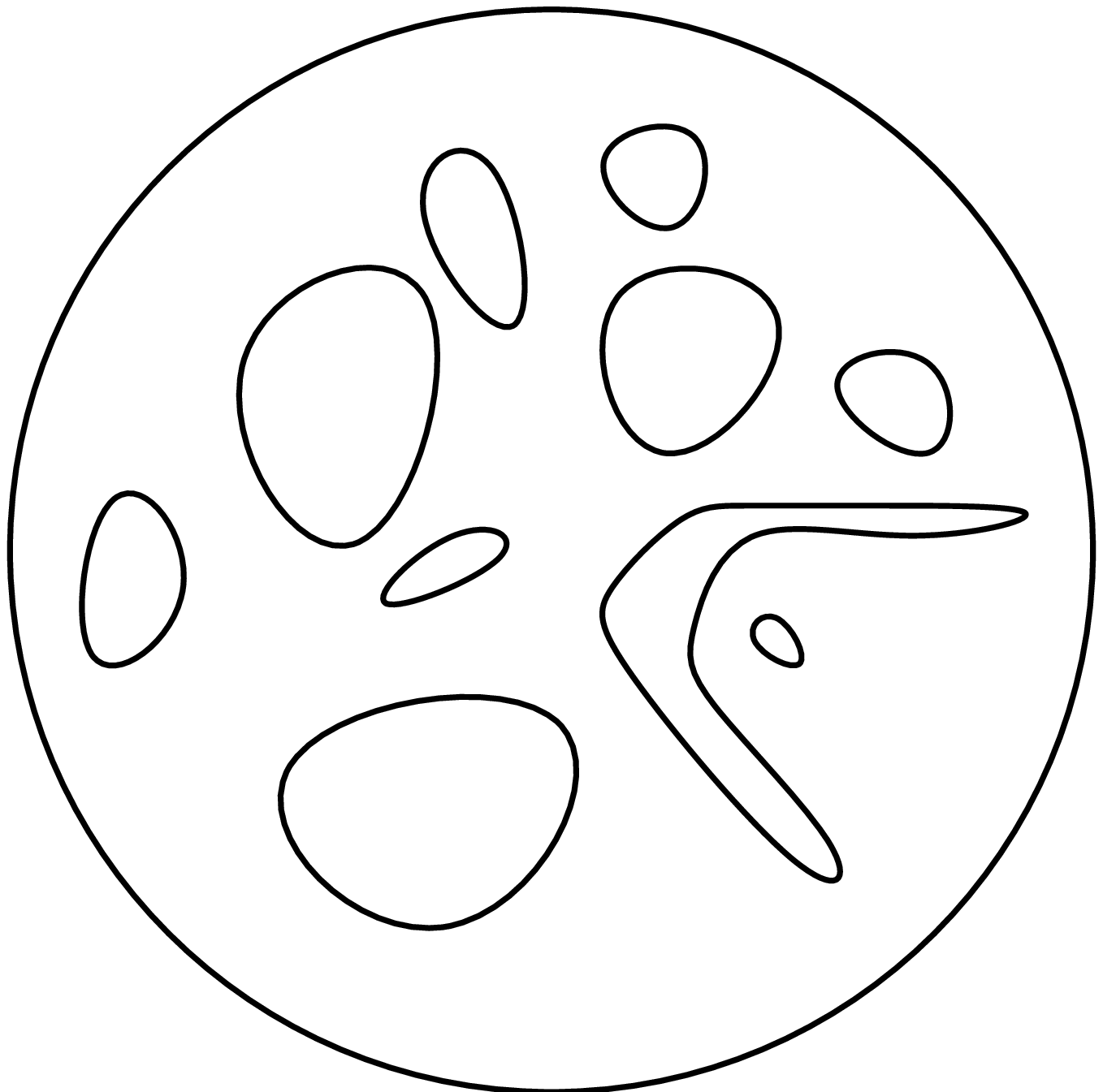}
\includegraphics [width=2in]{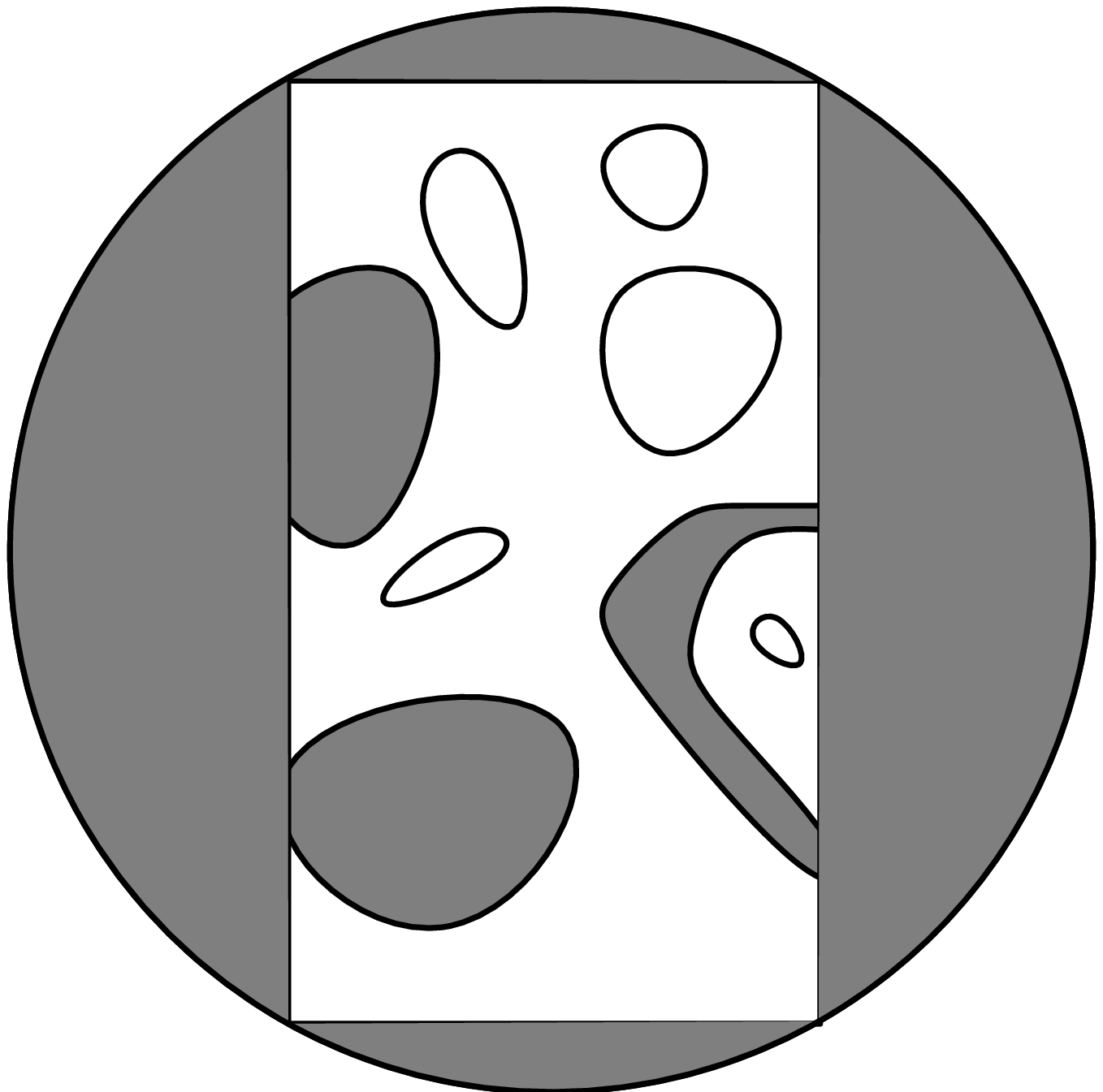}
\caption {CLE restriction property (sketch): $U$ is the rectangle and $\U \setminus U^*$ is darkened}
\end {center}
\end {figure}
\end{enumerate}

\subsection{Main results}
As mentioned above, \cite{MR2023758} pointed out a  way to relate Brownian loop-soups clusters to SLE-type loops:
Two loops in $\Gamma$ are said to be adjacent if they intersect.  Denote
by $\mathcal C(\Gamma)$ the set of clusters of loops under this relation.  For each element
$C \in \mathcal C(\Gamma)$ write $\overline C$ for the closure of the union of all the loops in $C$ and denote
by $\overline \Gamm$ the family of all $\overline C$'s.

\begin {figure}[htbp]
\begin {center}
\includegraphics [width=2in]{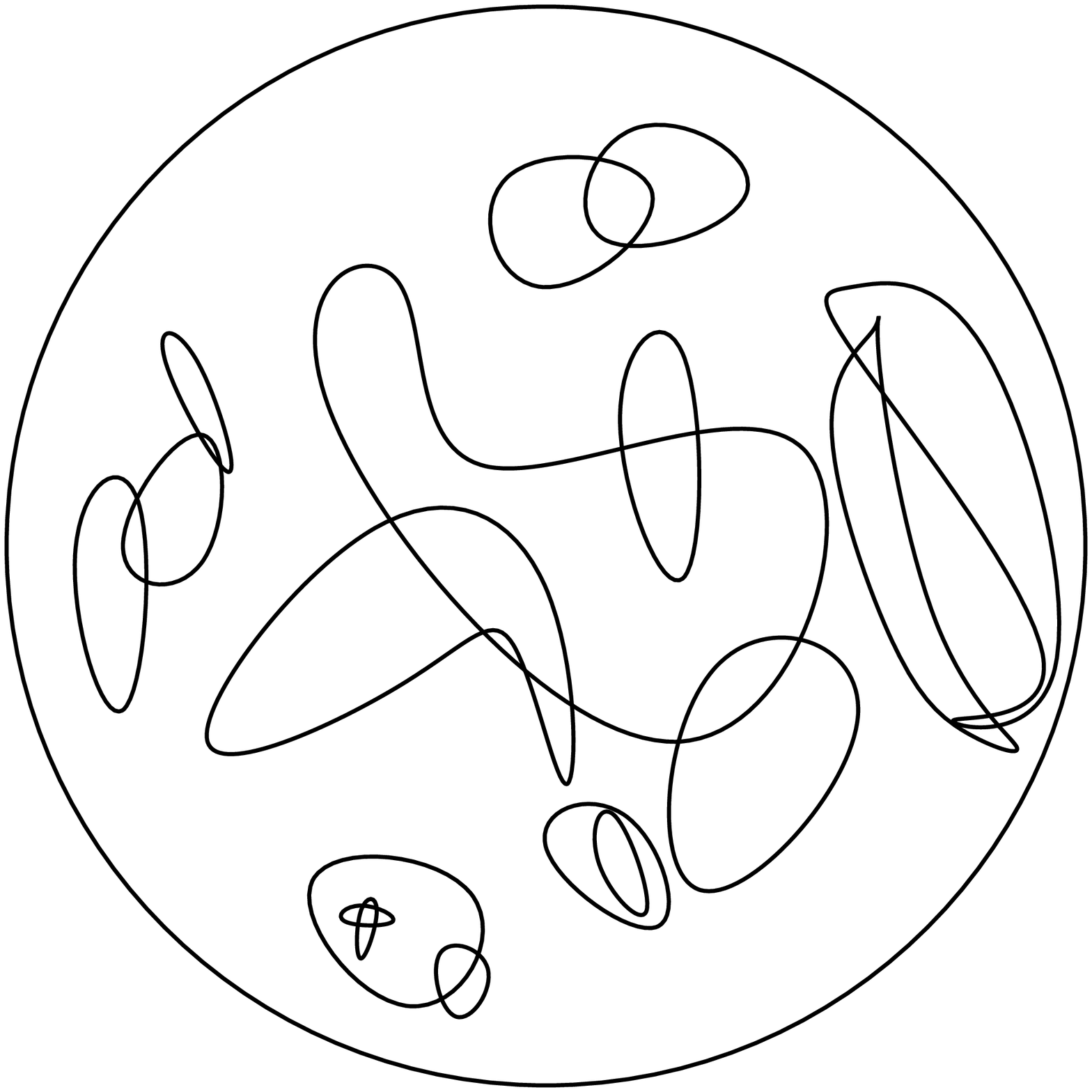}
\includegraphics [width=2in]{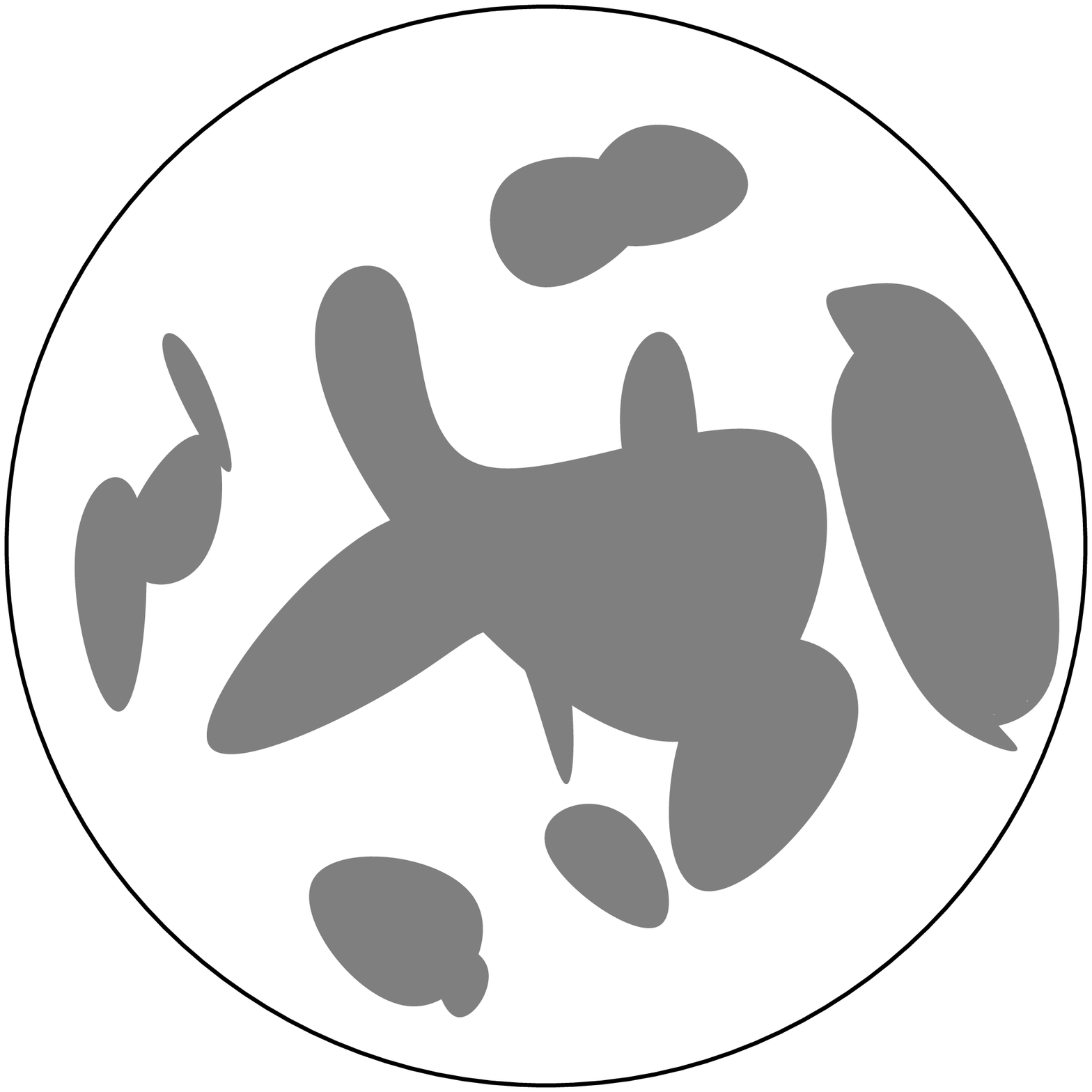}
\caption {A loup-soup and the fillings of its outermost clusters (sketch)}
\end {center}
\end {figure}

We write  $F(C)$ for the {\em filling} of $C$, i.e., for
the complement of the unbounded connected component of $\C \setminus \overline C$.
A cluster $C$ is called {\em outermost} if there exists no $C'$ such that $C \subset F(C')$.
The {\em outer boundary} of such an outermost cluster $C$ is the boundary of $F(C)$.
Denote by $\overline \Gamma$ the set of outer boundaries of outermost clusters of $\Gamma$.

\medbreak

Let us now state our main results:

\begin{theorem} \label{loopsoupsatisfiesaxioms}
Suppose that  $\Gamma$ is the Brownian loop-soup with intensity $c$ in $\U$.
\begin {itemize}
 \item
If $c \in (0,1]$,
then $\overline \Gamma$ is a random countable collection of disjoint simple loops that satisfies the conformal restriction axioms.
\item
If $c>1$, then there is almost surely only one cluster in $\mathcal C(\Gamma)$.
\end {itemize}
\end{theorem}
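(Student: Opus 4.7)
The plan is to treat the two regimes $c\leq 1$ and $c>1$ separately, using as unifying ingredients (a) the restriction property of $\mu$ combined with Poisson thinning of $\Gamma$, and (b) a quantitative annulus-crossing estimate for chains of pairwise-intersecting loops.

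For $c\leq 1$ I would verify in succession the conformal invariance of the law of $\overline{\Gamma}$, the topological regularity asserted (disjoint simple loops, non-nested, locally finite), and the restriction axiom. Conformal invariance is immediate from that of $\mu$, since the operations of clustering and extracting outer boundaries commute with conformal maps. For the restriction axiom, given a simply connected $U\subset\U$, Poisson thinning and the restriction property of $\mu$ decompose $\Gamma$ as an independent superposition $\Gamma^{\mathrm{in}}\sqcup\Gamma^{\mathrm{out}}$, where $\Gamma^{\mathrm{in}}$ is a loop-soup $\mathbb{P}_c$ in $U$. Because fillings of distinct outermost clusters of $\Gamma$ are pairwise disjoint (a shared point would force $\overline{C}\cap\overline{C'}\neq\emptyset$, violating the defining clustering relation), an outermost cluster $C$ satisfies $F(C)\subset U^*$ if and only if $C$ is an outermost cluster of $\Gamma^{\mathrm{in}}$ whose filling avoids the removed fillings. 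A second application of restriction, now to $\Gamma^{\mathrm{in}}$ itself, identifies the loops of $\Gamma$ staying inside $U^*$ as, conditionally on $U^*$, a Brownian loop-soup of intensity $c$ in $U^*$; the outer boundaries of its outermost clusters then realize the conditional law $P_{U^*}$ demanded by the axiom.

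The main technical obstacle is the topological regularity, which I would reduce to a single crossing estimate: for $c\leq 1$, the probability $p_c(r,R)$ that the Brownian loop-soup in the annulus $\{r<|z|<R\}$ contains a chain of pairwise-intersecting loops joining the two boundary circles tends to $0$ as $R/r\to\infty$. Such an estimate combines the finiteness of the $\mu$-mass of loops crossing any fixed annulus with a scale-by-scale inductive comparison that just barely fails to propagate chains when $c\leq 1$. Once it is in hand, Borel--Cantelli applied over a geometric sequence of nested annuli around each point of a countable dense subset of $\U$ forces (i) only finitely many outermost clusters of diameter $\geq\eps$ for every $\eps>0$; (ii) no outer boundary can self-touch, since a pinch point would generate crossings at every small scale; and (iii) closures of distinct outermost clusters are disjoint, since a shared boundary point would likewise force nested crossings. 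Pinning down the precise critical intensity as $c=1$, rather than merely \emph{some} $c_{\mathrm{crit}}\in(0,\infty)$, is where the bulk of the work lies and is what will make the output of \cite{sheffieldwerner1} applicable.

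For $c>1$ the argument runs in reverse: in this regime $p_c(r,R)$ stays bounded below uniformly in $R/r$, so Poissonian independence of $\Gamma$ across disjoint annular shells combined with a Borel--Cantelli-type argument shows that any two prescribed points $z,w\in\U$ lie in the same cluster of $\Gamma$ almost surely. Applying this to all pairs from a countable dense subset of $\U$ and using the connectivity of clusters then upgrades this to almost-sure uniqueness of the cluster.
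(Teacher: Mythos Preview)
Your outline captures the coarse architecture correctly, but the heart of the proof is precisely the step you dismiss as ``a scale-by-scale inductive comparison that just barely fails to propagate chains when $c\leq 1$.'' No such direct argument is known, and this is the genuine gap in your proposal. The paper does \emph{not} establish the annulus-crossing estimate $p_c(r,R)\to 0$ for $c\le 1$ by any self-contained percolation reasoning. Instead it proceeds as follows: (i) a comparison with Mandelbrot fractal percolation shows only that \emph{some} $c_0\in(0,\infty)$ exists; (ii) for subcritical $c$ the BK inequality gives exponential decay of multiple crossings, yielding local finiteness and simplicity of the boundary loops; (iii) the main result of \cite{sheffieldwerner1} is then invoked to identify $\overline\Gamma$ as a CLE$_\kappa$ for some $\kappa\in(8/3,4]$; (iv) SLE restriction martingales pin down $c=(3\kappa-8)(6-\kappa)/2\kappa$, which already forces $c_0\le 1$; (v) an SLE estimate on pinned loops shows $c_0$ itself is subcritical; and finally (vi) the possibility $c_0<1$ is ruled out by an elaborate argument using a generalized half-plane capacity $M_\alpha$ to show that adding an independent small-intensity loop-soup on top of the critical one would still fail to percolate, contradicting criticality.

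In particular, you have the logical dependence on \cite{sheffieldwerner1} inverted. You write that identifying $c_0=1$ ``will make the output of \cite{sheffieldwerner1} applicable,'' but in the paper \cite{sheffieldwerner1} is an \emph{input} used to prove $c_0=1$: it is the identification of subcritical cluster boundaries with CLE$_\kappa$, together with SLE computations, that allows one to locate the threshold. Your $c>1$ argument has the same defect in mirror image: the claim that $p_c(r,R)$ stays bounded below for $c>1$ is equivalent to $c_0\le 1$, which again is not available from first principles. (Separately, even granting that bound, ``crossings in disjoint annular shells plus Borel--Cantelli'' does not by itself connect two given points, since crossings of nested annuli need not link up; the paper's route to a single cluster for $c>c_0$ goes instead through Lemma~\ref{l.biggerc-onecluster}, exploiting chains of clusters touching at single points and then sprinkling.) Your restriction argument via Poisson thinning also needs care: $U^*$ is determined by entire clusters, which may contain loops of $\Gamma^{\mathrm{in}}$, so conditioning on $U^*$ is not the same as conditioning on $\Gamma^{\mathrm{out}}$; the paper handles this with a dyadic approximation of $U^*$.
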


It follows immediately from this result
and \cite{sheffieldwerner1} that $\overline \Gamma$ is a CLE$_\kappa$ (according to the branching SLE$_{\kappa,
\kappa-6}$ based definition in \cite{Sh}, see also \cite {sheffieldwerner1} for alternative descriptions) for {\it some}
$\kappa \in (8/3,4]$.  We will derive the following correspondence:

\begin{theorem} \label{kappacorrespondence}
Fix $c \in (0,1]$ and let $\Gamma$ be a Brownian loop-soup of intensity $c$ on $\U$.
Then $\overline \Gamma$ is a CLE$_\kappa$ where $\kappa \in (8/3,4]$ is determined by the relation
$c=(3\kappa-8)(6-\kappa)/2\kappa$.
\end{theorem}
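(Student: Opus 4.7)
The plan is to use Theorem \ref{loopsoupsatisfiesaxioms} together with the classification result of \cite{sheffieldwerner1} to reduce the problem to identifying a single function $c \mapsto \kappa(c)$, and then to pin down this function by matching the ``restriction partition function'' of \CLEk/ with the explicit loop measure weight coming from the Poissonian structure of $\Gamma$.

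\medskip

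First, applying Theorem \ref{loopsoupsatisfiesaxioms} to $\overline \Gamma$ and invoking the main classification theorem of \cite{sheffieldwerner1}, we get that $\overline \Gamma$ is distributed as \CLEkk{\kappa(c)}/ for some $\kappa(c) \in (8/3, 4]$. The correspondence $c \mapsto \kappa(c)$ is thus a well-defined map from $(0,1]$ to $(8/3,4]$, and the content of Theorem \ref{kappacorrespondence} is its explicit determination. A soft argument (e.g.\ a monotone coupling obtained by superimposing two independent loop-soups) shows that $\kappa(c)$ is monotone (and hence measurable) in $c$, so it suffices to verify the identity on a dense set of parameters; alternatively, one can directly compute $\kappa(c)$ from a single well-chosen conformally invariant observable.

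\medskip

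The key input is the Lawler--Schramm--Werner restriction formula for \SLEk/ with $\kappa \in (8/3,4]$: if $U \subset \U$ is a simply-connected subdomain sharing two boundary prime ends $a,b$ with $\U$, then chordal \SLEk/ in $\U$ from $a$ to $b$, conditioned to stay in $U$, has Radon--Nikodym derivative with respect to chordal \SLEk/ in $U$ proportional to a conformal-derivative factor at $a,b$ times $\exp\!\bigl(c(\kappa)\, \mu_\U(\{\gamma : \gamma \cap (\U\setminus U) \neq \emptyset\})\bigr)$ with $c(\kappa) = (3\kappa-8)(6-\kappa)/(2\kappa)$. Via the Poissonian \SLEk/-excursion construction of \CLEk/ in \cite{sheffieldwerner1}, this propagates to a restriction formula for the whole loop ensemble: restricting \CLEkk{\kappa(c)}/ to those loops of $\overline \Gamma$ contained in a subdomain $U^\ast$ introduces a weight $\exp(c(\kappa(c))\,\mu_\U(\{\gamma\not\subset U^\ast\}))$.

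\medskip

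On the other hand, the same restriction is transparent on the loop-soup side: independence of the Poisson loop measure yields that conditioning the loop-soup in $\U$ on the configuration of all loops intersecting $\U\setminus U^\ast$ leaves an independent Poisson loop-soup of intensity $c$ inside $U^\ast$, so the analogous reweighting has weight $\exp(c\,\mu_\U(\{\gamma \not\subset U^\ast\}))$. Matching the two expressions for a sufficiently rich family of sets $U^\ast$ forces the identity $c = c(\kappa(c))$, which is precisely the stated formula.

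\medskip

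The main obstacle is to make this matching rigorous at the level of cluster boundaries rather than individual loops: the \CLEk/ restriction formula records only the macroscopic loops in $\overline \Gamma$, whereas the loop-soup restriction formula records all Brownian loops. To bridge the gap one must argue that, conditionally on $\overline\Gamma$, the ``interior'' loops (those lying strictly inside some $\overline C$) are themselves an independent soup in each cluster filling and therefore can be integrated out consistently with the \SLE/-based computation --- in particular, that the remaining partition function (after conditioning on $\overline \Gamma$) depends on $U^\ast$ only through $\mu_\U(\{\gamma \not\subset U^\ast\})$. Once this bookkeeping is in place the two weights can be directly compared and Theorem \ref{kappacorrespondence} follows.
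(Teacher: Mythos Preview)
Your overall strategy---use Theorem~\ref{loopsoupsatisfiesaxioms} plus \cite{sheffieldwerner1} to get \emph{some} $\kappa(c)$, then pin down the value by comparing restriction behaviour---is exactly the paper's strategy. But the restriction formula you invoke is misstated in a way that makes the rest of the argument collapse. The Lawler--Schramm--Werner formula from \cite{LSWr} says that on paths staying in $U$, the Radon--Nikodym derivative of chordal \SLEk/ in $\U$ with respect to \SLEk/ in $U$ is proportional to $\exp\bigl(-c(\kappa)\, L(\gamma_{\mathrm{SLE}},\,\U\setminus U;\,\U)\bigr)$, where $L(\gamma_{\mathrm{SLE}},\U\setminus U;\U)$ is the $\mu$-mass of Brownian loops in $\U$ that hit \emph{both} the random path $\gamma_{\mathrm{SLE}}$ \emph{and} $\U\setminus U$. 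This is a random quantity depending on the SLE curve. What you wrote, $\mu_\U(\{\gamma:\gamma\cap(\U\setminus U)\neq\emptyset\})$, is the mass of \emph{all} loops hitting $\U\setminus U$; it is deterministic (in fact infinite, because of small loops near $\partial U$), so it is absorbed into the normalizing constant and carries no information about $\kappa$. The same confusion recurs in your ``whole ensemble'' and loop-soup formulas, so the quantities you propose to match are both meaningless.

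The paper resolves this by choosing a concrete observable for which the comparison is both finite and nontrivial: the \emph{pinned loop} $P^i$, i.e.\ the $\eps\to 0$ limit of the law of the loop surrounding $i$ conditioned to come within $\eps$ of a fixed boundary point. By \cite{sheffieldwerner1} this limit exists and equals the \SLEk/-excursion law $P^{i,\kappa}$, and the \cite{LSWr} martingale gives directly that on curves staying in a subdomain $H$, the density of $P^{i,\kappa}$ relative to $P_H^{i,\kappa}$ is a constant times $\exp\bigl(-c(\kappa)\,L(\gamma,\H\setminus H;\H)\bigr)$. On the loop-soup side, one couples the soups in $\H$ and $H$ by adding the loops that intersect $\H\setminus H$; given the pinned loop $\gamma'(i)$ in $H$, the conditional probability that no added loop touches $\gamma'(i)$ is exactly $\exp\bigl(-c\,L(\gamma'(i),\H\setminus H;\H)\bigr)$, and one checks that the remaining failure mode (a new surrounding cluster) has vanishing probability under the $\eps$-conditioning. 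Matching these two genuinely random weights forces $c=c(\kappa)$. Your final paragraph about integrating out interior loops is therefore not the real obstacle; the missing ingredient is the choice of the pinned-loop observable, which is what makes the restriction comparison both well-defined and sharp.
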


Since every $\kappa \in (8/3,4]$ is obtained for exactly one value of $c \in (0,1]$, this
implies the following:

\begin{corollary}
The random simple loop configurations satisfying the conformal restriction axioms
are precisely the CLE$_\kappa$ where $\kappa \in (8/3,4]$.
\end{corollary}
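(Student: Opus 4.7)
The plan is to combine the two theorems above with the main result of \cite{sheffieldwerner1}, which already gives one direction of the stated equivalence; the other direction is precisely what Theorems \ref{loopsoupsatisfiesaxioms} and \ref{kappacorrespondence} together supply.

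For the ``forward'' inclusion, suppose $\overline \Gamma$ is a random simple loop configuration in $\U$ satisfying the conformal restriction axioms. Then the result of \cite{sheffieldwerner1} (recalled in the introduction) says that $\overline \Gamma$ must have the law of CLE$_\kappa$ for some $\kappa \in (8/3,4]$. So I would invoke this result directly, without any further work.

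For the ``reverse'' inclusion, I would fix an arbitrary $\kappa \in (8/3,4]$ and define
\[ c := \frac{(3\kappa - 8)(6-\kappa)}{2\kappa}. \]
A short calculation shows $c$ is continuous and strictly increasing in $\kappa$ on $(8/3,4]$ with $c(8/3) = 0$ and $c(4) = 1$ (setting $f(\kappa) = c(\kappa) \cdot 2 = -3\kappa + 26 - 48/\kappa$, one has $f'(\kappa) = -3 + 48/\kappa^2 \geq 0$ on $(8/3,4]$, with equality only at $\kappa = 4$). Hence this formula defines a continuous bijection $(8/3,4] \to (0,1]$. Let $\Gamma$ be a Brownian loop-soup of intensity $c$ in $\U$, and let $\overline \Gamma$ be the collection of outer boundaries of its outermost clusters. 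By Theorem \ref{loopsoupsatisfiesaxioms}, $\overline \Gamma$ is a random collection of disjoint simple loops satisfying the conformal restriction axioms. By Theorem \ref{kappacorrespondence}, the law of $\overline \Gamma$ is that of CLE$_\kappa$ for this specific $\kappa$. Therefore the law of CLE$_\kappa$ itself satisfies the conformal restriction axioms.

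There is no real obstacle here: the corollary is an immediate consequence of Theorems \ref{loopsoupsatisfiesaxioms} and \ref{kappacorrespondence} together with \cite{sheffieldwerner1}. The only thing one needs to verify is the elementary fact that $c \mapsto \kappa$ is a bijection between the intervals $(0,1]$ and $(8/3,4]$, which ensures that every value of $\kappa$ in the target range is realized by some Brownian loop-soup. All of the genuinely substantial work, namely showing that loop-soup cluster boundaries satisfy the axioms and identifying the precise $c$--$\kappa$ relation, is carried out in the two theorems being invoked.
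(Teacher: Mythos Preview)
Your proposal is correct and follows essentially the same approach as the paper. The paper itself does not spell out a proof: it simply notes that ``every $\kappa \in (8/3,4]$ is obtained for exactly one value of $c \in (0,1]$'' and states the corollary as an immediate consequence of Theorems \ref{loopsoupsatisfiesaxioms} and \ref{kappacorrespondence} together with the main result of \cite{sheffieldwerner1}; your write-up supplies exactly this argument, with the added detail of verifying the monotonicity of $c(\kappa)$.
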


Recall from \cite {Be} that the Hausdorff dimension of an SLE$_\kappa$ curve is almost surely $1+ (\kappa /8)$. Our results therefore imply that the boundary of a loop-soup cluster of intensity $c \le 1$ has dimension
$$\frac { 37 -c - \sqrt { 25  + c^2 - 26 c }}{24} .$$
Note that just as for Mandelbrot's conjecture for the dimension of Brownian boundaries \cite {LSW4/3}, this statement does not involve SLE, but its proof does. In fact the
result about the dimension of Brownian boundaries can be viewed as the limit when $c \to 0$ of this one.

Furthermore we may define the {\em carpet}
of the CLE$_\kappa$ to be the random closed set $\U \setminus \cup_{\gamma \in \overline \Gamma} \,\interior \gamma$, and recall that SLE methods allowed \cite {MR2491617} to compute its
``expectation dimension'' in terms of $\kappa$.
The present loop-soup construction of CLE$_\kappa$ enables to prove
(see \cite {NW}) that this expectation dimension is indeed equal to its almost sure Hausdorff dimension $d$, and that in terms of $c$,
\begin {equation}
 d (c) =  \frac {187 - 7c + \sqrt {25 + c^2 - 26 c }}{96}
\end {equation}
The critical loop-soup (for $c=1$) corresponds therefore to a carpet of dimension $15/8$.

\medbreak

Another direct consequence of the previous results is the ``additivity property'' of CLE's:
If one considers two independent CLE's in the same simply connected domain $D$ with non-empty boundary, and looks at the union of these two, then either one can find a cluster whose boundary contains $\partial D$, or the outer boundaries of the obtained outermost clusters in this union form another CLE. This is simply due to the fact that each of the CLE's can be constructed via Brownian loop soups (of some
intensities $c_1$ and $c_2$) so that the union corresponds to a Brownian loop-soup of intensity $c_1+c_2$. This gives for instance a clean direct geometric meaning to the general idea (present on various occasions in the physics literature) that relates in some way two independent copies of the Ising model to the Gaussian Free Field in their large scale limit:
The outermost boundaries defined by the union of two independent CLE$_3$'s in a domain (recall \cite {ChSm} that CLE$_3$ is the scaling limit of the Ising model loops, and note that it corresponds to $c=1/2$) form a CLE$_4$ (which corresponds to level lines of the Gaussian Free Field, see \cite {SchSh,Dub} and to $c=1 = 1/2 + 1/2$).

\medbreak
This paper is structured as follows: We first study some properties of the Brownian loop-soup and of the clusters it defines. The main result of
Section \ref {S.2} is that when $c$ is not too large (i.e. is subcritical), the outer boundaries of outermost Brownian loop-soup clusters formally a random collection of disjoint
simple loops that does indeed satisfy the conformal restriction axioms.
By the main result of \cite {sheffieldwerner1}, this implies that they are CLE$_\kappa$ ensembles for some $\kappa$. In Section \ref {S.3}, we compare how loop-soups and SLE$_\kappa$ curves behave when one changes the domain that they are defined in, and we deduce from this the relation between $\kappa$ and $c$ in this subcritical phase.
In Section \ref {S.4}, we show that if the size of the clusters in a Brownian loop-soup satisfy a certain decay rate property, then the corresponding $c$ is necessary {\em strictly} subcritical. This enables to show that the loop-soup corresponding to $\kappa= 4$ is the only possible critical one, and completes the identification of all CLE$_\kappa$'s for $\kappa \in (8/3, 4]$ as loop-soup cluster boundaries.

\medbreak

The present paper is closely related to various other work on SLE and CLE, but
it will in fact directly use SLE results on only three distinct occasions: We use the main result of \cite {sheffieldwerner1} at the begining of Section \ref {S.3}, we use the standard SLE restriction properties from \cite {LSWr} and the description of CLE in terms of SLE excursions derived in \cite {sheffieldwerner1} in Section \ref {S.3}, and finally an estimate about the size of an SLE$_\kappa$ excursion borrowed from \cite {sheffieldwerner1}
 in Section \ref {S.4}.

\section{Loop-soup percolation}
\label {S.2}
The goal of this section is to prove the following proposition:

\begin{proposition} \label{p.satisfiesconformal} There exists a positive constant $c_0$ such that
 for all $c$ in $(0, c_0)$, the set $\overline \Gamma$ satisfies the conformal restriction axioms, whereas when $c$ is (strictly) greater than $c_0$, $\Gamma$ has only one cluster almost surely.
\end{proposition}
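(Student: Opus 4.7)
The plan is first to set up a phase transition via monotonicity, then to verify the conformal restriction axioms in the subcritical regime. Using the natural monotone coupling (a loop-soup of intensity $c$ is a Poissonian subsampling of one of intensity $c' > c$), the event that $\mathcal{C}(\Gamma)$ contains a single cluster is increasing in $c$, so one may define $c_0 := \inf\{c > 0 : \mathbb{P}_c[\#\mathcal{C}(\Gamma) = 1] = 1\} \in [0,\infty]$. I would show $c_0 > 0$ via a first moment argument: using finiteness of $L(A,A';\U)$ for disjoint compact sets and a dyadic decomposition of $\U$, the expected number of Brownian loops of macroscopic diameter is linear in $c$, and a Peierls-type chain-of-intersections bound yields that for $c$ small enough, outermost clusters are a.s.\ compact subsets of $\U$ and form a locally finite family. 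Conformal invariance of $\mathbb{P}_c$ together with an ergodicity / 0--1 argument upgrades ``single cluster with positive probability'' to ``single cluster almost surely'' whenever $c > c_0$.

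In the subcritical regime $c < c_0$, I would then establish that $\overline \Gamma$ is a locally finite collection of disjoint, non-nested simple loops. Local finiteness of outermost clusters of macroscopic diameter follows from the first moment estimate. Two distinct outermost clusters have disjoint fillings by the definition of outermost, yielding disjointness and non-nestedness of their outer boundaries. Each outer boundary is a simple loop because it bounds a simply connected compact filling; one may invoke the fact that the outer boundary of a single Brownian loop is already an SLE$_{8/3}$ loop and that cluster outer boundaries inherit simplicity from the planar topology of the cluster closure.

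To verify the restriction axiom, decompose $\Gamma = \Gamma^{\mathrm{in}} \sqcup \Gamma^{\mathrm{out}}$ into the independent Poisson processes of loops contained in $U$ and of loops not contained in $U$. Classify outermost clusters of $\Gamma$ as type (a) (containing only $\Gamma^{\mathrm{in}}$ loops, with filling in $U$) or type (b) (containing at least one $\Gamma^{\mathrm{out}}$ loop, with filling not in $U$). Then $\U \setminus U^*$ is exactly the union of fillings of type (b) outermost clusters. Letting $\Gamma^* := \{\gamma \in \Gamma : \gamma \subset U^*\}$, one checks that $U^*$ is measurable with respect to $\Gamma \setminus \Gamma^*$; hence the Poisson restriction property together with the restriction property of $\mu$ yields that, conditionally on $U^*$, the process $\Gamma^*$ is a loop-soup in $U^*$ of intensity $c$. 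A combinatorial identification shows that the outermost clusters of $\Gamma^*$ are exactly the type (a) outermost clusters of $\Gamma$ not contained in any type (b) filling, and that their outer boundaries are precisely the loops of $\overline \Gamma$ that stay in $U$. This delivers the restriction axiom, while conformal invariance of $\overline \Gamma$ is immediate from conformal invariance of $\mathbb{P}_c$ and the conformally natural definitions involved.

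The main obstacle I anticipate is the careful topological accounting in the restriction step, namely the bijection between outermost clusters of $\Gamma^*$ and the ``in-$U$'' outer boundaries of $\overline \Gamma$. One must rule out that loops of $\Gamma^{\mathrm{out}}$ can modify the cluster structure visible in $U^*$ (they cannot, because any cluster of $\Gamma$ with outer boundary in $U$ consists only of loops lying in $U^*$), and verify measurability of $U^*$ with respect to $\Gamma \setminus \Gamma^*$. A cluster can be outermost in $\Gamma^*$ but not in $\Gamma$ precisely when it is engulfed by a type (b) filling, and handling this case together with the boundary behavior at $\partial U^*$ is the main technical point.
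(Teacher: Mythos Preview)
Your overall architecture matches the paper's, but two of your steps do not survive contact with the scale invariance of the Brownian loop measure.

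The most serious gap is the argument for $c_0\in(0,\infty)$. A first-moment/Peierls bound on chains of intersecting loops is not available here: the loop measure $\mu$ is scale-invariant, so at every dyadic scale there is the \emph{same} expected number of loops of that scale, and a sum over chains of arbitrarily many small loops joining $A$ to $\partial\U$ does not converge. Finiteness of $L(A,A';\U)$ only controls \emph{single} loops touching two fixed compacta, not long chains built from small loops. The paper handles this by a multiscale comparison: it couples the loop-soup with Mandelbrot's fractal percolation (independent removal of dyadic squares at every scale) and invokes the Chayes--Chayes--Durrett phase transition for that model to get $c_0>0$; the reverse coupling gives $c_0<\infty$, which you do not address at all.

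Relatedly, local finiteness of $\overline\Gamma$ and the fact that each outer boundary is a simple loop do not follow from ``planar topology of the cluster closure'': the closure of a cluster could a priori have cut points or a non-locally-connected boundary, and a first-moment count on macroscopic clusters (even if you had one) would not rule out accumulation of infinitely many clusters of a fixed size near a point. The paper's key input here is a crossing estimate: by the BK inequality for Poisson point processes, the probability that $k$ disjoint chains of loops cross a fixed annulus decays exponentially in $k$. This single estimate yields both local finiteness (no accumulation of macroscopic clusters) and simplicity of each boundary (no point of discontinuity). Your restriction argument is essentially right in spirit, but note that the split $\Gamma=\Gamma^*\sqcup(\Gamma\setminus\Gamma^*)$ is \emph{not} a Poisson thinning (membership in $\Gamma^*$ depends on the whole cluster structure), so measurability of $U^*$ with respect to $\Gamma\setminus\Gamma^*$ alone does not deliver the conditional law of $\Gamma^*$; the paper resolves this cleanly by conditioning on a dyadic approximation $U_n^*$ of $U^*$, for which the event $\{U_n^*=V_n\}$ genuinely depends only on the loops outside $V_n$, and then letting $n\to\infty$.
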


In the present section, we will not use any results from \cite {Sh,sheffieldwerner1}. The proposition will follow
immediately from a sequence of lemmas that we now state and prove. In these initial results, we will focus on properties of the collection $\overline \Gamm$ of (closures of) the clusters defined by $\Gamma$ (and we will not talk about outer boundaries or outermost loops).

It is easy to see (and we will justify this in a moment) that when $c$ is very large, there almost surely
exists just one cluster, and that this cluster is dense in $\U$, i.e., that
almost surely, $\overline \Gamm = \{ \overline \U \}$.

\begin {lemma} \label{l.confrest}
Suppose that $\mathbb P_c (\overline \Gamm = \{ \overline \U \}) < 1 $.
Let $U \subset \U$ denote some open subset of $\U$, and define $U^*$ to be the set obtained by removing from $U$ all the
(closures of) loop-soup clusters $\overline C$ that do not stay in $U$. Then, conditionally on
$U^*$ (with $U^* \not= \emptyset$), the set of loops of $\Gamma$ that do stay in $U^*$ is distributed like a Brownian loop-soup in $U^*$.
\end {lemma}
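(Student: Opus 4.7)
The plan is to exploit the Poissonian structure of $\Gamma$ via an iterative exploration of the ``bad'' clusters, namely those whose closure is not contained in $U$, combined with two standard ingredients: (i) the conformal restriction property of the Brownian loop measure $\mu$ (property 2 of the loop-soup listed above), which implies that the sub-collection of loops in a Brownian loop-soup in $\U$ contained in a subdomain $V$ is itself a Brownian loop-soup in $V$ and is independent of the loops that exit $V$; and (ii) the Markov (thinning) property of Poisson point processes, namely that if one reveals all atoms of a Poisson point process lying in some measurable subfamily $A$, then conditionally on the revealed information the unrevealed atoms form an independent Poisson point process supported on the complement of $A$.

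First, I would split $\Gamma = \Gamma_U \sqcup \Gamma^c_U$ with $\Gamma^c_U := \{\gamma \in \Gamma : \gamma \not\subset U\}$; by (i) and (ii), $\Gamma_U$ is a Brownian loop-soup in $U$, independent of $\Gamma^c_U$. Next I would set $K_0 := (\U \setminus U) \cup \bigcup_{\gamma \in \Gamma^c_U} \gamma$ and inductively define $R_{n+1} := \{\gamma \in \Gamma_U \setminus (R_1 \cup \cdots \cup R_n) : \gamma \cap K_n \neq \emptyset\}$ together with $K_{n+1} := K_n \cup \bigcup_{\gamma \in R_{n+1}} \gamma$. Iterating (ii) along this exploration, conditionally on $\Gamma^c_U$ and $R_1, \ldots, R_n$ the remaining loops of $\Gamma_U$ form a Poisson point process with intensity $c\mu$ restricted to the set $\{\gamma \subset U : \gamma \cap K_n = \emptyset\}$. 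Writing $K_\infty := \bigcup_n K_n$, every loop of a bad cluster lies at finite graph-distance from $K_0$ in the intersection graph of loops of $\Gamma$ (by the very definition of a connected component) and is therefore revealed at some finite step; consequently, $K_\infty$ contains every loop of every bad cluster, and $U^* = U \setminus \overline{K_\infty}$. Passing to the limit in the Markov property above, the loops of $\Gamma$ staying in $U^*$ form, conditionally on all the revealed information, a Poisson point process of intensity $c\mu|_{\{\gamma \subset U^*\}}$, which by (i) is precisely the law of a Brownian loop-soup in $U^*$; since $U^*$ is measurable with respect to the revealed $\sigma$-algebra, conditioning on the coarser $\sigma(U^*)$ preserves this conclusion.

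The main technical obstacle is justifying the passage to the limit $n \to \infty$, because the exploration need not terminate in finitely many steps (there may be infinitely many bad clusters, and within a single cluster the graph-distance from $K_0$ to a given loop may grow without bound as the cluster grows). This is handled by a Laplace-functional argument: writing $A_n := \{\gamma \subset U : \gamma \cap K_n = \emptyset\}$, the sets $A_n$ decrease monotonically to $A_\infty = \{\gamma \subset U^*\}$, so the Laplace functionals $\exp\left(-\int (1 - e^{-f})\, c\, d\mu|_{A_n}\right)$ of the conditional Poisson laws converge by dominated convergence to $\exp\left(-\int (1 - e^{-f})\, c\, d\mu|_{A_\infty}\right)$ for all nonnegative test functions $f$ with support bounded away from $\partial \U$; combined with the martingale convergence theorem for the filtration generated by the successive revealings, this yields the desired conditional distribution of the unrevealed loops. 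Equivalently, one may verify that $\overline{K_\infty}$ is a stopping set for the Poisson process $\Gamma$ in the sense of Palm theory and appeal to the corresponding strong Markov property; either route is routine measure-theoretic bookkeeping once the exploration is set up as above.
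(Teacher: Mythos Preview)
Your argument is correct and reaches the same conclusion, but by a genuinely different route. The paper's proof is much shorter: it approximates $U^*$ from the inside by the largest union $U_n^*$ of dyadic $2^{-n}$-squares contained in $U^*$, observes that for any fixed such union $V_n$ the event $\{U_n^* = V_n\}$ depends only on the loops of $\Gamma$ not entirely contained in $V_n$ (on this event every loop of a bad cluster lies outside $V_n$), so that the loop-soup restricted to $V_n$ is independent of this event, and then lets $n \to \infty$. You instead explore the bad clusters outward from $\U \setminus U$ in layers, invoking Poisson thinning at each stage and passing to the limit via Laplace functionals and martingale convergence; this is more constructive and makes the step-by-step conditional independence explicit, at the price of the limiting argument you describe. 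One small imprecision worth flagging: the decreasing limit $\bigcap_n A_n$ is $\{\gamma \subset U : \gamma \cap K_\infty = \emptyset\}$, which can strictly contain $\{\gamma \subset U^*\}$, because $\overline{\bigcup_{\text{bad}} C}$ need not equal $\bigcup_{\text{bad}} \overline{C}$ before local finiteness of clusters has been established (and that is only proved later, in Lemma~\ref{l.simpleloops}). The fix is immediate---your limit yields a Poisson point process on the larger set, and one further Poisson restriction to $\{\gamma \subset U^*\}$, which is measurable with respect to the revealed $\sigma$-algebra, gives the stated law---but it is worth stating this extra step explicitly.
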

\begin {figure}[htbp]
\begin {center}
\includegraphics [width=2in]{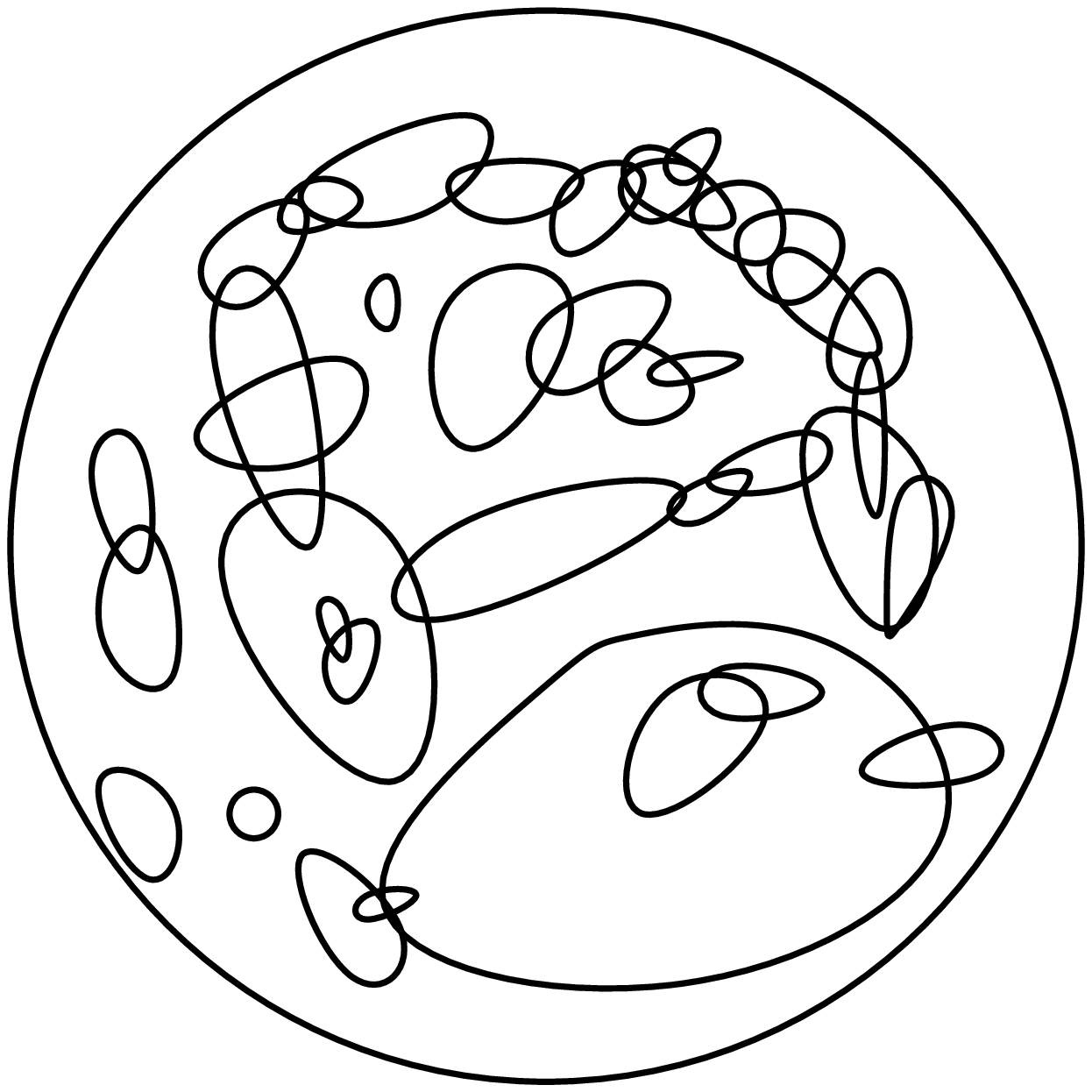}
\includegraphics [width=2in]{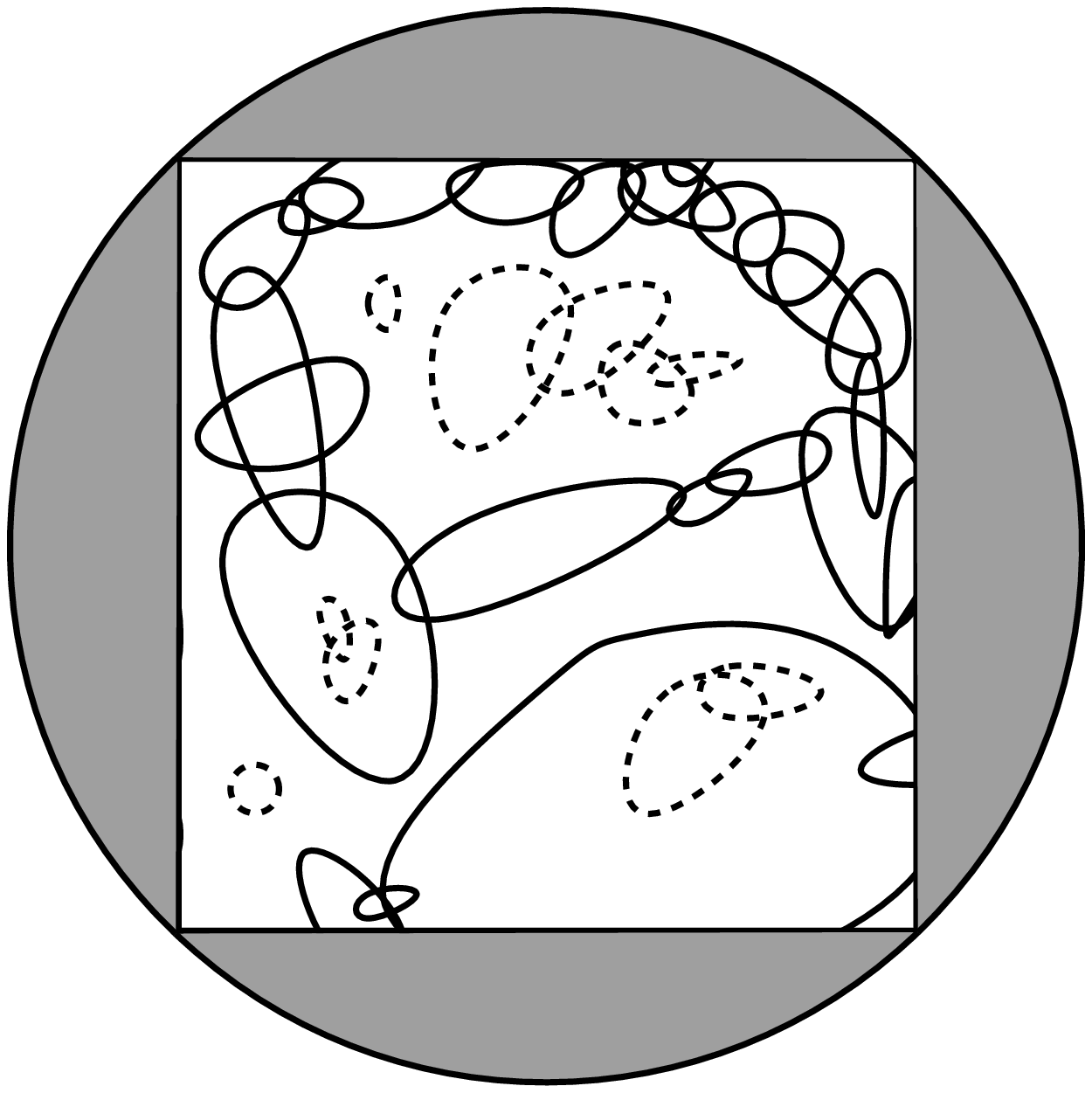}
\caption {Loop-soup clusters that stay in the rectangle $U$ are dashed (sketch)}
\end {center}
\end {figure}
Note that we have not yet proved at this point that in this case, $\overline \Gamma$ is a locally finite collection of disjoint simple loops.


\begin {proof}
Let us define for any $n \ge 1$, the set $U_n^*=U_n^* (U^*)$ to be the
 largest union of dyadic squares of side-lengths $2^{-n}$ that is contained in $U^*$ (note that this is a deterministic function of $U^*$).
For each $n \ge 1$, and for each union  $V_n$ of such dyadic squares the loop-soup restricted to $V_n$ is independent of the event $\{ U_n^* = V_n \}$.
It implies immediately that conditionally on $U^*$, the set of loops that do stay in $U_n^*$ is distributed like a Brownian loop-soup in $U_n^*$.
Since this holds for all $n$, the statement of the lemma follows.
\end {proof}

\begin{lemma} \label{l.biggerc-onecluster}
Suppose that $\mathbb P_c ( \overline \Gamm = \{ \overline \U \} ) < 1$ and that there is a $\mathbb P_c$ positive probability that $\overline \Gamm$ contains an element intersecting
the boundary of $\U$.  Then for all positive $c'$, $\mathbb P_{c+c'} ( \overline \Gamm = \{ \overline \U \} ) = 1$.
\end{lemma}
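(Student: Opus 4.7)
The strategy is to write $\Gamma_{c+c'}$ as the superposition of two independent loop-soups, $\Gamma_c \sqcup \Gamma'$, where $\Gamma_c$ has intensity $c$ and $\Gamma'$ has intensity $c'$ (by Poisson additivity of the loop-soup), and to combine the conformal invariance of $\Gamma_c$ (which yields a dichotomy for its boundary-touching structure) with a bridging argument using the extra loops of $\Gamma'$ to merge all clusters.

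Let $\mathcal B(\Gamma)\subset\partial\mathbb U$ denote the (random) set of boundary points lying in the closure of some cluster of $\Gamma$. Since $\mathbb P_c$ is invariant under the Möbius self-maps of $\mathbb U$, and this group acts transitively on open arcs of $\partial\mathbb U$, the probability $\mathbb P_c(\mathcal B(\Gamma_c)\cap I\ne\emptyset)$ is the same constant $q$ for every open arc $I\subsetneq\partial\mathbb U$. Taking arcs that exhaust $\partial\mathbb U$ and applying monotone convergence identifies this constant with $\mathbb P_c(\mathcal B(\Gamma_c)\ne\emptyset)$, which is positive by hypothesis. A countable union argument over arcs with rational endpoints then forces the dichotomy: almost surely, $\mathcal B(\Gamma_c)$ is either empty or dense in $\partial\mathbb U$. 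Applying the same dichotomy to $\Gamma_{c+c'}$, invoking ergodicity of the Möbius action on the Brownian loop-soup (so that the Möbius-invariant event $\{\mathcal B(\Gamma_{c+c'})=\emptyset\}$ has probability $0$ or $1$), together with the monotonicity $\mathcal B(\Gamma_{c+c'})\supseteq\mathcal B(\Gamma_c)$ and the hypothesis $q>0$, one concludes that $\mathcal B(\Gamma_{c+c'})$ is almost surely dense in $\partial\mathbb U$.

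Converting density into a single dense cluster uses the extra $c'$-loops. For any two boundary-touching clusters $C_1, C_2$ of $\Gamma_{c+c'}$, density provides boundary-accumulation points $z_i \in \overline{C_i}\cap\partial\mathbb U$ with $|z_1-z_2|$ arbitrarily small. Restricting the independent $\Gamma'$ to a small semicircular region straddling the arc $[z_1, z_2]$ yields, by Lemma~\ref{l.confrest}, a $c'$-loop-soup in that region; with positive probability this restricted soup contains a Brownian loop crossing the strip and intersecting boundary-accumulating loops of both $C_1$ and $C_2$, using the positive Brownian loop measure of crossing configurations together with the planar intersection properties of Brownian paths. A Borel--Cantelli argument over shrinking scales and a countable enumeration of cluster pairs then forces, almost surely, the merging of all boundary-touching clusters into a single cluster $C^*$ with $\partial\mathbb U\subset\overline{C^*}$, hence $\overline{C^*}=\overline{\mathbb U}$. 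Any further cluster would have its loops in a region densely interleaved with $C^*$-loops and would therefore intersect $C^*$-loops almost surely by two-dimensional Brownian intersection, contradicting disjointness. This yields $\mathbb P_{c+c'}(\overline\Gamm=\{\overline{\mathbb U}\})=1$.

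The main obstacle is the bridging step: confirming that a Brownian loop from $\Gamma'$ inside a thin boundary strip can be arranged to genuinely intersect (not merely pass near) loops of two chosen boundary-touching clusters of $\Gamma_c$. This requires a quantitative analysis of the Brownian loop measure of strip-crossing configurations, together with the almost-sure mutual intersection of two Brownian paths sharing a common planar region.
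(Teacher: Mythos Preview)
Your dichotomy (that $\mathcal B(\Gamma)$ is a.s.\ either empty or dense in $\partial\U$) is correct, but two subsequent steps fail. First, you assert ergodicity of the M\"obius action on the loop-soup to upgrade ``dense with positive probability'' to ``a.s.\ dense'' for $\Gamma_{c+c'}$; this is plausible but non-trivial and is neither proved nor cited. Second, and more seriously, the bridging argument is incoherent as stated: you take $C_1,C_2$ to be clusters of $\Gamma_{c+c'}=\Gamma_c\sqcup\Gamma'$ and then propose to use ``the independent $\Gamma'$'' to connect them, but $\Gamma'$ has already been consumed in forming those very clusters, so nothing independent remains. Even setting that aside, density of the union $\mathcal B$ in $\partial\U$ does not force the boundary-touching sets of two \emph{particular} clusters to come close: $\overline{C_1}\cap\partial\U$ and $\overline{C_2}\cap\partial\U$ may be well separated, the density being supplied by other clusters. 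Your closing paragraph concedes that the bridging step is unresolved; the issues above show it cannot be completed along these lines without substantial new input.

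The paper avoids both difficulties by a different mechanism. Rather than merging clusters near $\partial\U$, it iteratively explores inward: let $A_1$ be the union of clusters of $\Gamma_c$ touching a fixed boundary arc; by (the argument of) Lemma~\ref{l.confrest} the remaining loops form a fresh loop-soup in the complementary domain, and one repeats. The conformal radius seen from any fixed interior point is dominated by a product of i.i.d.\ factors, each strictly less than $1$ with positive probability, hence tends to $0$ a.s. This produces, for every interior point and every boundary arc, a finite chain of clusters of $\Gamma_c$ that are pairwise \emph{adjacent} (distance zero) linking the arc to an arbitrarily small neighbourhood of the point; a crossing argument then shows any two clusters of $\Gamma_c$ are joined by such a chain. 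The extra soup $\Gamma'$ now plays a trivial role: any single contact point between two adjacent $\Gamma_c$-clusters is a.s.\ surrounded by infinitely many small loops of $\Gamma'$, which merges them into one $\Gamma_{c+c'}$-cluster. No ergodicity, no boundary-proximity control, and no quantitative Brownian-intersection estimate are needed.
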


\begin{proof}
Assume the hypotheses of the lemma, and
let $A_1$ be the union of all elements of $\overline \Gamm$ that intersect some prescribed boundary arc $\partial$ of $\U$ of positive length.
By invariance under rotation, $\mathbb P_c ( A_1 \not= \emptyset ) > 0$.
Using the same
 argument as in the previous lemma, we get that if $U$ is a fixed open set such that $\overline U \subset \U$, then conditioned on the event $\overline U \cap \overline A_1 = \emptyset$,
the law of the set of loops in $\Gamma$ that are contained in $U$ is the same as its original law (since changing the set of loops within $U$ has no effect on $A_1$).
Since this holds for any $U$, conformal invariance of the loop soup implies that conditioned on $A_1$, the law
of the elements of $\Gamma$ that do not intersect $\overline A_1$ is that of independent copies of $\Gamma$ conformally mapped to
each component of $\U \setminus \overline A_1$.
Note that this in fact implies that the event that $A_1$ is empty is independent of $\Gamma$, and hence has probability zero or one (but we will not really need this).

The conformal radius $\rho_1$ of $\U \setminus \overline A_1$ seen from $0$ has a strictly positive probability to be smaller than one.
We now iterate the previous procedure: We let $U_2$ denote the connected component of $\U \setminus \overline A$ that contains the origin. Note that
the harmonic measure of $\partial_2:= \partial \cup A_1$ at $0$ in $U_1$ is clearly not smaller than the harmonic measure of $\partial$ in $\U$ at $0$ (a Brownian motion started at the origin that exits $\U$ in $\partial$ with necessarily exit $U_1$ through $\partial_2$). We now consider the loop-soup in this domain $U_2$, and we let $A_2$ denote the union of all loop-soup clusters that touch $\partial_2$. We then iterate the procedure, and note that the conformal radius of $U_n$ (from $0$) is dominated by a product of i.i.d. copies of $\rho_1$.

This shows  that for any positive $\delta$  one can almost surely find in $\overline \Gamma$ a finite sequence of clusters $\overline C_1, \ldots, \overline C_k$, such that
$d (C_j , C_{j+1}) = 0$ for all $j < k$,  such that $C_1$ touches $\partial$ and $d ( C_k, 0) \le \delta$.
By conformal invariance, it is easy to check that the same is true if we replace the origin by any fixed point $z$.
Hence, the statement holds almost surely, simultaneously for all
points $z$ with rational coordinates, for all rational $\delta$, and all boundary arcs of $\partial \U$ of positive length.

Note that almost surely, each loop of the loop soup surrounds some point with rational coordinates. We can therefore conclude (see Figure \ref {loops1000}) that almost surely,
any two clusters in $\overline \Gamm$ are ``connected'' via a finite sequence of adjacent clusters in $\overline \Gamm$.
\begin {figure}[htbp]
\begin {center}
\includegraphics [width=2.5in]{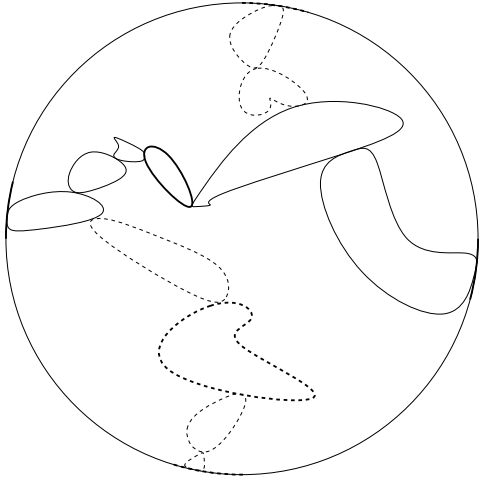}
\caption {The dark solid loop is part of a crossing of (light solid) loops from the left boundary segment to the right boundary segment.  The dark dotted loop is part of a crossing of loops from the lower boundary segment to the upper boundary segment.}
\label{loops1000}
\end {center}
\end {figure}

If we now augment $\Gamma$ by adding an independent Brownian loop-soup $\Gamma'$ of
intensity $c'$ for any given positive $c'$, the new loops will almost surely join together any two adjacent clusters of $\overline \Gamm$ into
a single cluster (this is just because any given point $z' \in \U$ -- for instance one chosen contact point between two adjacent clusters of $\overline \Gamm$ -- will almost surely be surrounded by infinitely many small loops of $\Gamma'$).
Furthermore, for an analogous reason, almost surely, any loop of $\Gamma'$ intersects some loop of $\Gamma$.
It follows that for all $c'>0$, $\mathbb P_{c+c'}$ almost surely, there exists just one single cluster, i.e., $\overline \Gamm = \{ \overline \U \}$.
\end{proof}

\begin{lemma} \label{l.firstpcdef}
There is a critical constant $c_0 \in [0, \infty]$ such that
\begin{enumerate}
 \item If  $c > c_0$, then $\mathbb P_c (\overline \Gamm = \{ \overline \U \}) = 1$.
 \item If  $c  \in (0, c_0)$, then $\mathbb P_c$ almost surely \begin{enumerate}
 \item[(a)] $\overline \Gamm$ has infinitely many elements.
 \item[(b)] No element of $\overline \Gamm$ intersects the boundary of $\U$.
 \item[(c)] No two elements of $\overline \Gamm$ intersect each other. \end{enumerate}
\end{enumerate}
\end{lemma}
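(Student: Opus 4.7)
The plan is to define $c_0$ monotonically and then derive the three properties of case~2 using the preceding two lemmas. By Poissonian superposition, a Brownian loop-soup of intensity $c+c'$ may be realized as the union of two independent loop-soups of intensities $c$ and $c'$; since adding loops can only merge clusters, the event $\{\overline\Gamm=\{\overline\U\}\}$ is monotone non-decreasing in $c$. I set
\[
c_0 := \inf\{c>0 : \mathbb P_c(\overline\Gamm=\{\overline\U\})=1\} \in [0,\infty],
\]
so that part~1 is immediate. For part~2, fix $c<c_0$. Property~(b) follows directly from Lemma~\ref{l.biggerc-onecluster}: if with positive probability some cluster met $\partial\U$, then since also $\mathbb P_c(\overline\Gamm=\{\overline\U\})<1$ by choice of $c_0$, that lemma would force $\mathbb P_{c+c'}(\overline\Gamm=\{\overline\U\})=1$ for every $c'>0$, contradicting the definition of $c_0$ upon choosing $0<c'<c_0-c$.

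For property~(c) I argue by contradiction. Suppose that with positive probability two distinct clusters $\overline C_1,\overline C_2$ of $\Gamma$ share a point $p\in\U$; local finiteness of loops at every scale then forces each $C_i$ to contain infinitely many small loops accumulating at $p$. I would reduce this to a violation of~(b) through a countable family of deterministic slit subdomains $U=\U\setminus\sigma$, with $\sigma$ a short straight segment of rational endpoints and rational direction: for any realization of the putative event, some $U$ in the family has $\sigma$ passing so close to $p$ that the sub-collections of $C_1$ and $C_2$ consisting of loops contained in $U$ have closures touching $\sigma$, while remaining in distinct clusters of the sub loop-soup $\Gamma_U$ (they are already in distinct clusters of $\Gamma$). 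By the restriction property applied to the deterministic $U$, $\Gamma_U$ is a Brownian loop-soup of intensity $c$ in $U$, so a countable union bound exhibits some fixed $U$ for which two distinct clusters of $\Gamma_U$ touch $\partial U$ with positive probability, contradicting~(b) after conformal transport from $U$ to $\U$.

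For property~(a), suppose instead that $\mathbb P_c(\#\overline\Gamm=k)>0$ for some $1\leq k<\infty$. By~(b) each $\overline C_i$ is compactly contained in $\U$, so one may fix a small open rational disk $D\subset\U$ close enough to $\partial\U$ that the joint event $\{\#\overline\Gamm=k\}\cap\{D\cap\bigcup_i\overline C_i=\emptyset\}$ has positive probability. On this event every loop of $\Gamma$ contained in $D$ lies in a cluster of $\Gamma$ disjoint from $\overline C_1,\dots,\overline C_k$, since $D$ is disjoint from each. By the restriction property the loops of $\Gamma$ staying in $D$ form a Brownian loop-soup of intensity $c$ in $D$, which a.s.\ contains infinitely many small loops and hence at least one cluster, producing a $(k{+}1)$-st element of $\overline\Gamm$ and a contradiction. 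The main obstacle is the geometric step in~(c): since $p$ is random while $U$ must be deterministic, the slit family must be designed so that, in every possible configuration, loops of both $C_i$ inside $U$ accumulate on $\sigma$ rather than merely at the interior point~$p$, and the subtle case of one cluster approaching $p$ from every direction calls for slits of every rational orientation together with a careful case analysis to locate the required boundary touching.
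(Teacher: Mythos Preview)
Your definition of $c_0$ and the arguments for (b) and (a) are correct and in the same spirit as the paper's; for (a) the paper is even more direct, simply noting that the loops of $\Gamma$ are a.s.\ dense in $\U$, so if there were only finitely many clusters then one closure would have to meet $\partial\U$, giving $\neg\textrm{(a)}\Rightarrow\neg\textrm{(b)}$ without any restriction argument.

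The real gap is in (c), and the obstacle you flag at the end is fatal to the slit approach rather than a detail to be patched. For a segment $\sigma$ through $p$ and $U=\U\setminus\sigma$, the loops of $C_i$ surviving in $\Gamma_U$ are precisely those \emph{not} crossing $\sigma$; there is no reason any surviving sub-cluster should touch $\sigma$. Concretely, suppose $C_1$ is a chain $\ell_1,\ell_2,\ldots$ converging to $p$ in which only consecutive loops meet, and the even-indexed loops cross $\sigma$ while the odd-indexed ones do not. Then in $\Gamma_U$ each $\ell_{2k+1}$ becomes an isolated singleton cluster at positive distance from $\sigma$; no $\Gamma_U$-cluster arising from $C_1$ touches $\partial U$. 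Nothing prevents such an alternating pattern for every rational orientation simultaneously, so the proposed case analysis over directions cannot be completed.

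The paper's route avoids this entirely by exploiting the \emph{difference} between the clusters rather than their common point: since $\overline C_1\neq\overline C_2$, one chooses from a countable deterministic family an open $U$ with $\overline C_1\subset U$ but $\overline C_2\not\subset U$, and applies Lemma~\ref{l.confrest}. Removing $\overline C_2$ when forming $U^*$ forces the touching point $p$ onto $\partial U^*$, while the entire cluster $C_1$ sits inside the conditional loop-soup in $U^*$ with $\overline C_1\ni p$; conformal invariance then gives positive probability that a cluster meets $\partial\U$, i.e.\ $\neg\textrm{(b)}$. One well-chosen $U$ thus replaces your whole family of slits and the attendant case analysis.
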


\begin{proof}
Suppose that $\mathbb P_c ( \overline \Gamm = \{ \overline \U \} ) <1$;
if there is a $\mathbb P_c$ positive probability that two elements of $\overline \Gamm$ intersect each other, then (applying Lemma \ref {l.confrest} to some  $U$
that contains one but not the other with positive probability)  we find that there is a positive probability that an element of $\overline \Gamm$ intersects $\partial \U$.

Also, if $c>0$ and $\overline \Gamm$ has only finitely many elements with positive probability, then (with the same probability) at least one of these elements must intersect $\partial \U$ (since the loops of $\Gamma$ are dense in $\U$ a.s.).

Thus, Lemma \ref{l.biggerc-onecluster} implies that if $c_0$ is the supremum of $c$ for which (a), (b), and (c) hold almost surely, then $\Gamma$ has only one cluster $\mathbb P_c$ almost surely whenever $c>c_0$.
\end{proof}

We say $c$ is {\em subcritical} if the (a), (b), and (c) of Lemma \ref{l.firstpcdef} hold $\mathbb P_c$ almost surely. We will later show that $c_0$ is subcritical,
but we have not established that yet. We remark that the proof of Lemma \ref {l.firstpcdef} shows that in order to prove that some $c$ is subcritical, it suffices to check (b).

\begin{lemma}
\label{pc}
The $c_0$ of Lemma \ref{l.firstpcdef} lies in $(0,\infty)$.  Moreover, when $c>0$ is small enough, there are $\mathbb P_c$
almost surely continuous paths and loops in $D$ that
intersect no element of $\Gamma$.
\end{lemma}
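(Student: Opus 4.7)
The plan rests on a single basic input: for any annulus $A\subset\U$ of positive modulus, the $\mu$-mass $M(A)$ of Brownian loops in $\U$ crossing $A$ (hitting both boundary circles of $A$) is finite, by the finiteness remark on $L(\cdot,\cdot;\U)$ in the setup. The number of loops of $\Gamma$ crossing $A$ is therefore Poisson with mean $cM(A)$.

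\emph{Lower bound $c_0>0$ and avoiding paths.} Fix any annulus $A\subset\U$ of positive modulus. For $c$ small, $\mathbb{P}_c(\text{no loop of }\Gamma\text{ crosses }A)\ge e^{-cM(A)}>0$; on that event, any concentric circle strictly between the two boundary circles of $A$ is a continuous loop in $\U$ disjoint from every loop of $\Gamma$ (tangencies are probability-zero and are avoided by picking a generic radius, using that only finitely many loops of $\Gamma$ have diameter $\ge\eps$ for any $\eps>0$). Running this argument simultaneously on a sequence of disjoint annuli at well-separated scales, with the independence coming from the Poisson-diameter decomposition described below, promotes this from positive-probability to almost-sure existence of avoiding loops and arcs for $c$ small. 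To upgrade to the almost-sure subcriticality property (b) of Lemma~\ref{l.firstpcdef}, I would apply the same estimate to the concentric annuli $A_n(z)=\{w\in\U:2^{-n-1}<|w-z|<2^{-n}\}$ at all dyadic scales $n$ and all $z$ in a countable dense subset of $\partial\U$; conformal (scale) invariance of $\mu$ and the restriction property give a universal bound $M_\ast$ on the crossing mass, uniform in $z$ and $n$. Decomposing $\Gamma$ into independent Poisson processes $\Gamma^{(k)}$ by dyadic loop-diameter ranges, and restricting each crossing event to ``localized'' crossings (loops additionally contained in a disk of radius $\sim 2^{-n+2}$ around $z$, of still-bounded $\mu$-mass) isolates the matching-scale portion $k\asymp n$ of the soup. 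Along a well-separated subsequence $n_\ell=\ell K$ with $K$ large, these localized crossing events become genuinely independent Bernoulli events of probability at most $cM_\ast<1$, so Borel--Cantelli yields infinitely many blocking (uncrossed) annuli around each $z$ almost surely; a compactness/covering argument on $\partial\U$, relying on local finiteness of loops of any fixed minimum diameter, makes the conclusion uniform in $z$, and the finitely many very-large loops not covered by localization are absorbed separately. Blocking annuli at arbitrarily small scales around $z\in\partial\U$ prevent any cluster from reaching $z$, so no cluster touches $\partial\U$, giving (b); condition (c) then follows by the argument already given in the proof of Lemma~\ref{l.firstpcdef}, (a) from the analogous blocking at interior points, and hence $c_0>0$.

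\emph{Upper bound $c_0<\infty$.} By Lemma~\ref{l.biggerc-onecluster} it suffices to exhibit some finite $c$ at which there is positive probability both that $\overline{\Gamm}\neq\{\overline{\U}\}$ and that some cluster has closure touching $\partial\U$; the second alone is enough provided the first is also nontrivial. For $c$ large, a multi-scale box-percolation argument delivers this. Partition $\U$ into small boxes $B_i$; each adjacent pair is bridged by a Poisson$(cL(B_i,B_j;\U))$ number of loops, with $L(B_i,B_j;\U)>0$. Standard finite-range bond-percolation reasoning then gives, for $c$ large, positive (indeed high) probability of a cluster of $\Gamma$ connecting a prescribed interior point to within $\eps$ of $\partial\U$ for every fixed $\eps>0$; a monotone-limit argument in $\eps\to 0$ together with compactness of $\overline{\U}$ produces a cluster whose closure touches $\partial\U$. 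Lemma~\ref{l.biggerc-onecluster} then yields $c_0\le c<\infty$.

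\emph{Main obstacle.} The delicate step is the uniform-in-$z$ almost-sure version of the blocking-annulus statement underlying (b): obtaining genuinely independent crossing events across scales requires combining the Poisson decomposition by loop diameter with localization to matched-scale loops, and promoting ``for each $z$'' to ``for every $z\in\partial\U$'' requires a compactness argument that leans on the local finiteness of loops of any fixed minimal diameter. The remaining ingredients — the finite annulus-crossing mass, the Poisson/percolation estimates at large $c$, and Lemmas~\ref{l.confrest} and~\ref{l.biggerc-onecluster} — are essentially routine once the independence setup is in place.
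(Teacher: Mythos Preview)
Your lower-bound argument for $c_0>0$ has a genuine gap that the paper's fractal-percolation coupling is specifically designed to avoid.

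The claim that, on the event $\{\text{no loop of }\Gamma\text{ crosses }A\}$, a generic concentric circle inside $A$ is disjoint from \emph{every} loop of $\Gamma$ is false. ``Crossing'' means hitting both boundary circles; but the loop-soup almost surely contains infinitely many tiny loops lying entirely inside the annulus, dense in $A$, and any concentric circle intersects infinitely many of them. Your parenthetical about tangencies and local finiteness does not help: local finiteness only bounds the number of loops of diameter $\ge\eps$ for fixed $\eps$, whereas the total number of loops meeting any open set is almost surely infinite. The same issue defeats the blocking argument for condition (b): even if no \emph{single} loop crosses $A_n(z)$, a \emph{chain} of small loops contained in the annulus can connect its two boundary circles, so a cluster can still pass through. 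Your Borel--Cantelli step controls only single-loop crossings, not cluster crossings, and the decomposition by loop diameter does not rescue this, since the problematic chains at scale $2^{-n}$ are built from loops much smaller than $2^{-n}$.

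This is precisely why the paper replaces each loop by a dyadic square of matching scale and compares the complement of the soup with the random set $M$ of Mandelbrot's fractal percolation, where an \emph{independent} Bernoulli removal is performed at every dyadic scale simultaneously; the Chayes--Chayes--Durrett theorem then produces, for $p$ close to $1$ (i.e.\ $c$ small), genuine continuous crossings in $M$ that lie at positive distance from every loop. No single-scale annulus estimate can substitute for this multi-scale construction. Your upper-bound sketch is closer in spirit but also incomplete: in a fixed box partition of $\U$, the mass $L(B_i,B_j;\U)$ of loops bridging adjacent boxes degenerates as the boxes approach $\partial\U$, so the bond-percolation parameter is not bounded below uniformly and the monotone limit in $\eps$ does not yield a single finite $c$. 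The paper again uses fractal percolation here, showing that for large $c$ every point is a.s.\ surrounded by a loop of $\Gamma$, which forces a single cluster directly.
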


\begin{proof}
We first prove the latter statement: for small $c>0$ there exist almost surely simple paths crossing $\U$ that intersect no element of $\Gamma$.  This
will also imply $c_0 >0$.  To this end we will couple the loop-soup with a well-known fractal percolation model.
The argument is similar in spirit to the one for multi-scale Poisson percolation in \cite{MR1409145}, Chapter 8.

Consider the unit square $D=(0,1)^2$ instead of $\U$. For each $n$, we will divide it into $4^n$ dyadic squares
of side-length $2^{-n}$. To each such square $C$, associate a Bernoulli random variable $X(C)$ equal to
one with probability $p$. We assume that the $X(C)$ are independent.
Then, define
\begin{equation} \label{e.mdef}
 M= [0,1]^2 \setminus \bigcup_{C \ : \ X(C)=0} C .
\end{equation}
This is the fractal percolation model introduced by Mandelbrot in \cite{MR665254} (see also the book \cite{MR1409145}).
It is very easy to see that the area of $M$ is almost surely zero as soon as $p<1$.
Chayes, Chayes and Durrett \cite{MR931500}
have shown that this model exhibits a phase transition:
There exists a $p_c$, such that for all $p \ge p_c$, $M$ connects the left and right sides of the
unit square with positive probability, whereas for all $p < p_c$, this is a.s.\ not the case
(note that in fact, if $p \le 1/4$, then $M$ is almost surely empty by a standard martingale argument).
Here, we will only use the fact that for $p$ large enough (but less than one), $M$ connects the two opposite sides of the
unit square with positive probability.  We remark that the proof in \cite{MR931500} actually gives (for large $p$) a positive probability that there
exists a continuous path from the left to right side of the unit square in $M$ that can be parameterized as $t \to (x(t),y(t))$ where $t \in [0,1]$
and $x(t)$ is non-decreasing. It also shows (modulo a straightforward FKG-type argument) that $M$ contains loops with positive probability.

Now, let us consider a loop-soup with intensity $c$ in the unit square.
For each loop $l$, let $d(l) \in (0,1)$ denote its $L^1$-diameter (i.e., the maximal variation of the $x$-coordinate or of the $y$-coordinate), and define $n(l) \ge 0$
 in such a way that $d(l) \in [2^{-n-1}, 2^{-n})$.
Note that $l$ can intersect at most 4 different dyadic squares with side-length $2^{-n}$. We
can therefore associate
in a deterministic (scale-invariant and translation-invariant)
manner to each loop $l \subset (0,1)^2$, a dyadic site $s = (j2^{-n(l)}, j'2^{-n(l)}) \in (0,1)^2$  such that
$l$ is contained in the square $S_l$ with side-length $2 \times 2^{-n(l)}$ and bottom-left corner at $s$. Note that $S_l \subset (0,2)^2$.

We are first going to replace all loops $l$ in the loop-soup by the squares $S_l$.
This clearly enlarges the obtained clusters. By the scale-invariance and the Poissonian character of
the loop-soup, for each fixed square
\begin{equation}
\label{e.stype} S=(j2^{-m}, (j+2) 2^{-m}) \times (j' 2^{-m}, (j'+2) 2^{-m}) \subset (0,2)^2,
\end{equation}
the probability that there exists no loop $l$ in the loop-soup such that
$S_l=S$ is (at least) equal to $\exp ( - bc)$ for some positive constant $b$ (that is independent of $m$).
Furthermore, all these events (when one lets $S$ vary) are independent.

Hence, we see that the loop-soup percolation process is dominated by a variant of Mandelbrot's fractal percolation model:
let $\tilde X$ denote an independent percolation on squares of type \eqref{e.stype},
with each $\tilde X(S)$ equal to $1$ with probability $\tilde p = \exp (-bc)$ and define the random compact set
\begin{equation}
\tilde M= [0,2]^2 \setminus \bigcup_{S \ : \ \tilde X(S)=0} S.
\end{equation}
Note that in the coupling between the loop-soup and $\tilde M$ described above, the distance between $\tilde M$ and each fixed loop in the loop-soup
is (strictly) positive almost surely.  In particular, $\tilde M$ is contained in the complement of the union of the all the loops (and their interiors).

We now claim that this variant of the percolation model is dominated by Mandelbrot's original percolation model with a larger (but still less than 1) value of $p = p(\tilde p)$ that we will choose in a moment.
To see this, let $\hat X$ be a $\hat p$-percolation (with $\hat p = p^{1/4}$) on the set of $(C,S)$ pairs with $C$ a dyadic square and $S$ a square comprised of $C$ and three
of its neighbors.  Take
$$X(C) = \min_{S\ : \ C \subset S} \hat X(C,S) \hbox { and } \tilde X(S) = \max_{C \ : \ C \subset S} \hat X(C,S).$$
Clearly $X(C)$ is a Bernoulli percolation with parameter $p = \hat p^4$ and $\tilde X(S)$ is a Bernoulli percolation with parameter $1-(1-\hat p)^4$.
Let us now choose $p$ in such a way that $\tilde p  = 1 - (1- \hat p)^4$. Note that $p$ tends monotonically to $1$ as $\tilde p$ tends to $1$.
Hence, by taking $\tilde p$ sufficiently close to $1$, i.e. $c$ sufficiently small, we can ensure that $p$ is as close to $1$ as we want (so that $M$
contains paths and loops with positive probability). But
in our coupling, by construction we have
$$X(C) \leq \min_{C \subset S} \tilde X(S),$$
 and thus $M \subset \tilde M$.

We have now shown that $c_0 > 0$, but we still have to show that $c_0 < \infty$.  We use a similar coupling with the fractal percolation
model. For any dyadic square that does not touch the boundary of $[0,1]^2$, we let $X(C)$ be $0$ if $C$ is surrounded by a loop in $\Gamma$ that is contained in the set of eight neighboring dyadic squares to $C$ (of the same size).
The $X(C)$ are i.i.d.\ (for all $C$ whose eight neighbors are contained in $D$), and have a small probability (say smaller than $1/4$) of being $1$ when $c$ is taken sufficiently large.
We now use the fact, mentioned above, that if $p \le 1/4$, then $M$ is almost surely empty; from this we conclude easily that almost surely, for each $C$ (whose incident neighbors
are in $D$) every point in $C$ is surrounded by a loop in $\Gamma$ almost surely.  It follows immediately that almost surely, every point in $D$ is surrounded by a loop in $\Gamma$.
This implies that almost surely all loops of $\Gamma$ belong to the same cluster (since otherwise there would be a point on the boundary of a cluster that was not surrounded by a loop).
\end{proof}

\begin{lemma} \label{exponentialcrossingdecay}
If $c$ is sub-critical, the probability that there are $k$
disjoint chains of loops in $\overline \Gamm$ crossing from the inside to the outside of a fixed annulus decays exponentially in $k$.
\end{lemma}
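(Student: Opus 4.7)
The plan is to combine a single-chain crossing estimate in the subcritical phase with an iterated conditioning argument based on the restriction property (Lemma~\ref{l.confrest}).

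The key ingredient is the following base estimate: for subcritical $c$, there exist a modulus threshold $M^* > 0$ and a constant $p < 1$ such that, for any loop-soup of intensity $c$ in a simply connected domain $D$ and any doubly-connected region $B \subset D$ of modulus at least $M^*$, the probability that the loop-soup contains a chain of loops crossing $B$ is at most $p$. By the conformal invariance of the Brownian loop measure, this probability depends only on the modulus of $B$, so it suffices to verify it for one specific annulus. Monotonicity in $c$ combined with Lemma~\ref{pc} establishes the estimate at small $c$, where loop-avoiding continuous paths prevent crossings with positive probability. For general subcritical $c$ the estimate is obtained by contradiction using condition (b) of Lemma~\ref{l.firstpcdef}: if the crossing probability equaled $1$ for some modulus $M^*$, then iterating the restriction property inside nested conformal copies of the annulus would produce, almost surely, an infinite nested sequence of crossing clusters; a Hausdorff-compactness argument would then extract a limit cluster whose closure meets $\partial \U$, contradicting subcriticality.

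Given the base estimate, I iterate to reach the exponential bound. Fix a concentric sub-annulus $A' \subset A$ whose modulus still exceeds $M^*$. Let $E_k$ denote the event that $k$ disjoint chains of loops cross $A$ (and hence $A'$). On $E_k$ I reveal the ``outermost'' crossing chain $\chi_1$ of $A'$, defined as the crossing chain whose cluster closure $\overline{C}_1$ is nearest the outer boundary of $A'$. Let $U^*$ be the connected component of $\U \setminus \overline{C}_1$ containing the inner boundary of $A'$, modified by removing all other cluster closures that fail to lie in this component. By Lemma~\ref{l.confrest}, conditional on $U^*$, the loops of $\Gamma$ contained in $U^*$ form a loop-soup of intensity $c$ in the simply connected domain $U^*$; and the remaining $k-1$ disjoint chains all lie in $U^*$ and cross an annular region of $U^*$ separating the inner boundary of $A'$ from $\overline{C}_1$, a region whose modulus is at least $M^*$ by choice of $A' \subset A$ and outermost character of $\chi_1$. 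The base estimate applied via a conformal map from $U^*$ to $\U$ then gives a conditional factor at most $p$. Iterating $k-1$ times yields $\mathbb{P}(E_k) \leq p^{k-1}$.

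The main obstacle is the upgrade, within the base estimate, from the qualitative subcriticality property (no cluster touches $\partial\U$) to the quantitative uniform crossing bound $p < 1$ at a fixed modulus; this is the passage from a zero-probability tail event to an effective probability estimate. A secondary technical point, handled by passing from $A$ to $A'$, is ensuring that the residual annular region in each conditioning step retains modulus at least $M^*$, so that the uniform base estimate continues to apply throughout the iteration.
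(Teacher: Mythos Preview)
Your base estimate is false as stated. The crossing probability for a doubly-connected region $B$ by a loop-soup in a simply connected $D\supset B$ is a conformal invariant of the \emph{pair} $(D,B)$, not of $B$ alone; it does not depend only on the modulus of $B$. In fact no uniform bound $p<1$ over all such pairs with a fixed modulus exists: by scaling, crossing $\{1<|z|<2\}$ by the loop-soup in $\{|z|<R\}$ is the same as crossing $\{R^{-1}<|z|<2R^{-1}\}$ by the loop-soup in $\U$, and as $R\to\infty$ the $\mu$-mass of single loops in $\{|z|<R\}$ intersecting both boundary circles of $\{1<|z|<2\}$ diverges (this is the quantity $L(\{|z|=1\},\{|z|=2\};\{|z|<R\})$ in the paper's notation), so already the one-loop crossing probability tends to $1$. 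Since your peeling step replaces $(\U,A)$ by a new pair $(U^*,\text{residual region})$ at each stage, the induction cannot be closed without that uniform bound. There are further gaps in the iteration itself: disjoint crossings of an annulus are only cyclically ordered, so an ``outermost'' one (``nearest the outer boundary'') is not well-defined; and the assertion that the residual separating region in $U^*$ retains modulus at least $M^*$ is neither made precise nor justified.

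The paper's argument avoids all of this. It first shows that for the \emph{fixed} annulus inside $\U$ the single-crossing probability $p$ is strictly less than $1$: by condition (b) of Lemma~\ref{l.firstpcdef} no cluster closure meets $\partial\U$, so the probability that some cluster crosses $\{r<|z|<1-1/n\}$ tends to $0$ as $n\to\infty$, and a short two-step argument transfers this to an arbitrary fixed annulus in $\U$. It then applies the van den Berg--Kesten inequality for Poisson point processes: $k$ disjoint chains are precisely $k$ disjoint occurrences (using disjoint sets of loops) of the single-crossing event, yielding $\mathbb P_c(E_k)\le p^k$ directly, with no domain-changing iteration.
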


\begin{proof}
We know that $c$ is subcritical, so that for all $r<1$, the probability that there exists a crossing of the annulus $\{ z \ : \ r < | z| < 1 -1 /n \}$
by a chain of loops goes to $0$ as $n\to \infty$. Hence, there exists $r_1 \in (r, 1)$ such that the probability that there is a single crossing of $\{ z \ : \ r < | z | < r_1 \}$ is strictly smaller than one.
Hence, it follows easily that if we consider any given annulus, $\{ z \ : \  r < | z - z_0 | < r' \} \subset \U$, there exists a positive probability that no cluster of the loop-soup crosses the annulus (just consider the two independent events of positive probability that the loop-soup restricted to $\{ z \ : \ |z -z_0 | < r'/r_1\}$ contains no crossing of the annulus, and that no loop intersects both the circles of radius $r'$ and $r'/r_1$ around $z_0$).  In other words, the probability that there exists a crossing of the annulus is strictly smaller than one.
 The result then follows from the BK inequality for Poisson point processes (see for instance \cite {vdB} and the references therein).
\end{proof}

As a consequence (letting $k \to \infty$), we see that for each fixed annulus, the probability that it is crossed by infinitely many disjoint chains of loops is zero.
From this we may deduce the following:

\begin{lemma} \label{l.simpleloops}
If $c$ is sub-critical, the set $\overline \Gamm$ is almost surely locally finite, and moreover the elements of $\overline \Gamma$ are almost surely continuous simple loops.
\end{lemma}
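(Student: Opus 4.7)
\medskip\noindent\textbf{Proof proposal.}
The plan is to deduce both assertions from the consequence of Lemma~\ref{exponentialcrossingdecay} noted in the paragraph just above its statement: almost surely, every fixed annulus is crossed by only finitely many disjoint chains of loops in $\overline{\Gamm}$. Taking a countable union over annuli with centers in $\mathbb{Q}^2$ and rational radii, the stronger event that \emph{every} such rational annulus is crossed by only finitely many disjoint chains of loops also holds almost surely; we work on this event throughout.

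For local finiteness of $\overline{\Gamm}$, I would fix a rational $\eps>0$ and cover $\overline{\U}$ by a finite family of balls $B_j=B(z_j,\eps/4)$ with centers $z_j$ on a rational $\eps/4$-net. If $\overline{\Gamm}$ contained infinitely many clusters of diameter at least $\eps$, pigeonhole would give some $B_j$ meeting infinitely many of them. Each such cluster $\overline{C}$ is connected, meets $B_j$, and by its diameter also leaves $B(z_j,\eps/2)$, so it contains a chain of loops crossing the rational annulus $B(z_j,\eps/2)\setminus \overline{B_j}$. Loops from distinct clusters are disjoint by definition, so these chains are pairwise disjoint, contradicting our almost-sure event. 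Intersecting over rational $\eps>0$ then yields the local finiteness of $\overline{\Gamm}$, which immediately implies that of $\overline{\Gamma}$.

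For the statement that each element of $\overline{\Gamma}$ is a continuous simple loop, fix an outermost cluster $C$. Since $F(C)$ is simply connected by construction, showing $\partial F(C)$ is a Jordan curve reduces to (i) local connectedness of $\partial F(C)$ and (ii) absence of self-intersections (``pinches'') of $\partial F(C)$. For (i), a failure of local connectedness of $\overline{C}$ at some $z\in\partial F(C)$ would give, for arbitrarily small $r>0$, infinitely many distinct components of $\overline{C}\cap B(z,r)$ accumulating at $z$; each such component must reach $\partial B(z,r)$, for otherwise it would be an isolated subcluster of $C$ disconnected from the rest, contradicting the fact that loops of $C$ are linked into a single cluster only via finite chains of pairwise intersecting loops. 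Each such component therefore carries a chain of loops crossing $B(z,r)\setminus B(z,r/2)$, and chains arising from distinct components share no loops and are pairwise disjoint. A rational approximation of $(z,r)$ produces infinitely many disjoint chains crossing a single rational annulus, contradicting our almost-sure event.

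The main obstacle is (ii), since a single simple pinch only produces two disjoint chains in small annuli around the pinch point---too few to contradict the exponential-decay estimate directly. My plan is to exploit the local richness of the Brownian loop-soup near a putative pinch at $z$: the two ``branches'' of $\partial F(C)$ through $z$ correspond to two prime ends of $F(C)$ at $z$, each of them associated with a piece of $\overline{C}$ approaching $z$, and the Brownian loop measure of loops in $\U$ meeting both of these pieces at scale $r$ diverges as $r\downarrow 0$. Almost surely then, infinitely many loops of the soup at every scale simultaneously meet both branches; each such loop lies in $C$ and locally merges the branches, so no genuine pinch of $\partial F(C)$ can survive. Making this rigorous will require an intrinsic definition of the branches via prime ends, a measurable-selection/countable-cover argument to handle the random location of $z$, and the restriction property (Lemma~\ref{l.confrest}) to decouple the small connecting loops in a disc around $z$ from the ambient cluster structure outside.
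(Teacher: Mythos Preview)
Your arguments for local finiteness and for continuity of the boundary (your item (i)) follow essentially the same annulus-crossing route as the paper, and are correct in outline. The divergence is in how you rule out self-intersections (your item (ii)). The paper dispatches this in one line with a much simpler observation you have overlooked: almost surely, no two loops of $\Gamma$ intersect at only a single point (this is a zero-probability event for any fixed pair of independent Brownian-loop boundaries, and there are only countably many pairs in the soup). Granting this, $\overline{C}$ can have no cut point: if $z$ were one, then each simple loop of $C$, with $z$ removed, would lie entirely on one side of the split $\overline{C}\setminus\{z\}=A\sqcup B$, and since $C$ is a single chain-connected cluster some $A$-side loop must meet some $B$-side loop --- necessarily at the single point $z$. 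Combined with continuity of the boundary curve, the absence of cut points forces simplicity.

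Your proposed route via divergence of the loop measure near the pinch is not wrong in spirit --- indeed, any small loop of the soup encircling the putative pinch point $z$ would have to belong to $C$ (by disjointness of subcritical clusters) and would then place $z$ in the interior of $F(C)$, a contradiction. But as you yourself note, making this rigorous requires measurable-selection or countable-cover gymnastics precisely because $z$ is random and determined by the same soup whose loops you want to use. The paper's cut-point argument sidesteps this entirely: ``no pair of loops meets in a single point'' is an almost-sure event over a countable family that does not require identifying any random location.
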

\begin{proof}
If the $\overline \Gamm$ are not locally finite, then for some positive $\epsilon$ there exists a point $z \in \overline \U$ and a sequence $\overline C_n$ of
elements of $\overline \Gamm$ of size greater than $\eps$ such that $d (z, C_n) \to 0$ as $n \to \infty$.
 Hence any annulus with outer radius less than $\epsilon/2$ that surrounds $z$ will have infinitely many crossings.  By Lemma \ref{exponentialcrossingdecay},
the probability that such a point exists is zero (this result just follows from the lemma by considering a countable collection of annuli such that each point in $\U$ is surrounded by arbitrarily small annuli from this collection).

No element of $\overline \Gamm$  can have a cut point (since almost surely no two loops of $\Gamma$ intersect at only a single point -- recall also that the elements of $\Gamma$ are all {\em simple} loops),
so if the boundary is a continuous curve, it must be simple.  If the boundary has a point of discontinuity, then there must be infinitely many chains of loops in $\Gamma$ crossing an annulus surrounding that point, which Lemma \ref{exponentialcrossingdecay} again rules out.
\end{proof}

Proposition \ref{p.satisfiesconformal} now follows from the results proved in this section: Lemma \ref{pc} gives the existence of $c_0$, Lemma \ref{l.simpleloops} implies that the loops are simple and locally finite a.s., and Lemma \ref{l.confrest} yields the restriction property.

\section {Relation between $c$ and $\kappa$}
\label {S.3}

The main result of \cite{sheffieldwerner1} combined with Proposition \ref {p.satisfiesconformal}
now implies the following:

\begin{corollary} \label{CLEcor}
If $c$ is sub-critical, then the set $\overline \Gamma$ is a CLE$_\kappa$ for some $\kappa \in (8/3, 4]$.
In other words, for such $c$, the $\overline \Gamma$
is equivalent in law to the loop ensemble constructed in \cite{Sh} via branching SLE$_{\kappa, \kappa-6}$.
\end{corollary}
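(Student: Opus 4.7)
The plan is to derive Corollary \ref{CLEcor} by simply combining Proposition \ref{p.satisfiesconformal} with the main classification result of \cite{sheffieldwerner1}. No new argument is needed; the entire content of the corollary is in verifying that, when $c$ is sub-critical, $\overline\Gamma$ literally satisfies every hypothesis that the characterization theorem of \cite{sheffieldwerner1} demands.

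First, I would enumerate the structural properties of $\overline\Gamma$ already established in Section \ref{S.2}. Non-emptiness (indeed almost sure infiniteness) comes from Lemma \ref{l.firstpcdef}(2)(a). The disjointness of distinct elements of $\overline\Gamma$ is Lemma \ref{l.firstpcdef}(2)(c). The non-nested property is automatic from the definition, since $\overline\Gamma$ is built from \emph{outermost} clusters only. Local finiteness and the fact that each boundary is a simple loop are given by Lemma \ref{l.simpleloops}. Thus $\overline\Gamma$ is almost surely a simple loop configuration in $\U$ in the sense defined in Section 1.2.3.

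Second, I would invoke Proposition \ref{p.satisfiesconformal}, which gives both the conformal invariance under Moebius self-maps of $\U$ and the restriction property for simply connected sub-domains. Together with the previous paragraph, this shows that the law of $\overline\Gamma$ satisfies all the conformal restriction axioms stated in Section 1.2.3, and that these axioms are interpreted in exactly the same way as in \cite{sheffieldwerner1}.

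Finally, I would apply the main theorem of \cite{sheffieldwerner1}, which asserts that any non-empty random simple loop configuration in $\U$ obeying the conformal restriction axioms is a CLE$_\kappa$ (in the sense of the branching SLE$_{\kappa,\kappa-6}$ construction of \cite{Sh}) for some $\kappa \in (8/3, 4]$. Applied to $\overline\Gamma$, this yields the corollary. The proof therefore records no genuine obstacle; the only point requiring care is the bookkeeping check that the non-emptiness, non-nestedness, disjointness, local finiteness, and conformal restriction axioms as stated in the present paper match verbatim the hypotheses of the classification theorem of \cite{sheffieldwerner1}. Pinning down the specific value of $\kappa$ in terms of $c$ is deferred to Theorem \ref{kappacorrespondence} and is not addressed here.
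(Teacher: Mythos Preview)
Your proposal is correct and matches the paper's approach exactly: the paper states the corollary as an immediate consequence of Proposition~\ref{p.satisfiesconformal} combined with the main result of \cite{sheffieldwerner1}, without giving a separate proof. Your bookkeeping check of the hypotheses is a faithful (and slightly more detailed) unpacking of precisely that one-line deduction.
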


We will now identify $\kappa$ in terms of $c$.
\begin{proposition} \label{p.ckappa} For all subcritical $c$, the set $\overline \Gamma$ is in fact a CLE$_\kappa$ with $c=(3\kappa-8)(6-\kappa)/2\kappa$.
\end{proposition}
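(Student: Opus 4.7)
The plan is to pin down $\kappa$ by matching the way the two constructions of CLE$_\kappa$ in $\U$ transform under restriction to a subdomain $U\subset \U$. Both constructions carry explicit ``partition function'' formulas of the same shape, namely an exponential of a Brownian loop measure, but with prefactors $c$ (loop-soup intensity) versus $c_\kappa:=(3\kappa-8)(6-\kappa)/(2\kappa)$ (SLE central charge). The equality of the two weights forces $c=c_\kappa$.

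\medskip

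\textbf{Step 1 (SLE excursion side).} Fix a root $z_0\in\partial\U$. By the description of CLE$_\kappa$ in terms of SLE$_\kappa$ excursions from \cite{sheffieldwerner1}, the CLE loops touching a neighborhood of $z_0$ are the outer boundaries of a Poisson point process of SLE$_\kappa$ bubbles rooted at $z_0$, with intensity $\nu_\kappa^{\U,z_0}$. Let $U\subset\U$ agree with $\U$ near $z_0$ and let $\Phi:U\to\U$ be the conformal map fixing $z_0$ with $|\Phi'(z_0)|=1$. The standard SLE$_\kappa$ restriction formula from \cite{LSWr} then expresses the density of $\nu_\kappa^{\U,z_0}$ restricted to bubbles $\gamma$ contained in $U$, with respect to $\nu_\kappa^{U,z_0}$ (transported by $\Phi^{-1}$), as
\[
\exp\!\Bigl(c_\kappa\cdot \mu\bigl\{L\subset\U \,:\, L\cap(\U\setminus U)\ne\emptyset,\; L\cap\gamma\ne\emptyset\bigr\}\Bigr),
\]
the boundary conformal-weight factor disappearing because of the normalization $|\Phi'(z_0)|=1$.

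\medskip

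\textbf{Step 2 (loop-soup side).} By Proposition \ref{p.satisfiesconformal}, conditionally on $U^*=U$ (i.e.\ on no cluster exiting through $\U\setminus U$), the collection of loops of $\Gamma$ staying in $U$ is a loop-soup in $U$ of the same intensity $c$. Combining this with the Poissonian (Campbell/Palm) calculus for the Brownian loop measure $c\mu$, one can compute the intensity of cluster-boundary ``bubbles'' through $z_0$ contained in $U$: it is the $U$-intensity, reweighted by
\[
\exp\!\Bigl(c\cdot \mu\bigl\{L\subset\U \,:\, L\cap(\U\setminus U)\ne\emptyset,\; L\cap\gamma\ne\emptyset\bigr\}\Bigr),
\]
with the very same loop-measure weight as on the SLE side but with prefactor $c$ instead of $c_\kappa$. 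The key point is the restriction $\mu|_U=\mu^U$ from \cite{LW} together with the Poisson superposition/thinning structure of $\Gamma$.

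\medskip

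\textbf{Step 3 (matching).} By Corollary \ref{CLEcor}, $\overline\Gamma$ is a CLE$_\kappa$ for some $\kappa\in(8/3,4]$, so the two descriptions must agree. Equating the two intensities on bubbles in $U$ gives, for every $U$ and every bubble functional,
\[
\exp\!\bigl((c-c_\kappa)\cdot \mu\{L\subset\U: L\cap(\U\setminus U)\ne\emptyset,\ L\cap\gamma\ne\emptyset\}\bigr)=1.
\]
Choosing $U$ so that the inner loop-measure can take arbitrary positive values (e.g.\ a small hull attached to $\partial\U$ far from $z_0$, shrunk to make the loop measure small but nonzero) forces $c=c_\kappa$, i.e.\ $c=(3\kappa-8)(6-\kappa)/(2\kappa)$.

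\medskip

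\textbf{Main obstacle.} The delicate part is Step 2: rigorously extracting the $\exp(c\,\mu(\cdots))$ factor from the loop-soup side. The loop-soup ``restriction to $U$'' is morally a conditioning on the cluster-structural event $\{U^*=U\}$, not a pure multiplicative reweighting, and one must carefully translate this into a clean density on the space of bubbles/excursions rooted at $z_0$. The correct framework is to view the outermost cluster touching $z_0$ as a marked point of a Poisson process on the cluster space and to compute its Palm intensity via Campbell's formula applied to $c\mu$. Normalizing with $|\Phi'(z_0)|=1$ on the SLE side is what isolates the bulk loop-measure coefficient and reduces the identification of $\kappa$ to a single scalar matching.
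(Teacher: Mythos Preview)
Your overall strategy --- identify $\kappa$ by matching the domain-restriction Radon--Nikodym factors, which on both the SLE and loop-soup sides are exponentials of a Brownian loop mass with coefficients $c_\kappa$ and $c$ respectively --- is precisely the paper's approach, and your Step~1 invokes the same SLE restriction input from \cite{LSWr}. The paper's framing differs slightly: rather than boundary-bubble intensities at $z_0\in\partial\U$, it works with the \emph{pinned loop measure} $P^i$, the $\eps\to 0$ limit of the law of the CLE loop $\gamma(i)$ surrounding $i$ conditioned on $d(0,\gamma(i))\le\eps$, which \cite{sheffieldwerner1} already identifies as $P^{i,\kappa}$ for some $\kappa$. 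Since CLE loops do not touch $\partial\U$, some such limiting procedure is needed in any case, and working with a single conditioned loop rather than a Poisson bubble intensity keeps the comparison to a single Radon--Nikodym identity.

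Your Step~2 is the real gap, as you yourself flag. Invoking ``Palm/Campbell calculus for $c\mu$'' does not by itself extract the factor $\exp(c\cdot\mu\{\cdots\})$ for a cluster boundary: the CLE loop is the outer boundary of an entire cluster of Brownian loops, not a single Brownian loop, so Campbell's formula on the point process with intensity $c\mu$ does not apply directly, and conditioning on $\{U^*=U\}$ is not the same as a multiplicative tilt of the bubble law. The paper fills this gap by a short coupling argument. Take $H=\H\cap\{|z|<3\}$, couple the loop-soup in $\H$ with its restriction to $H$, and let $\gamma'(i)$ be the loop surrounding $i$ for the $H$-soup. Then $\gamma(i)=\gamma'(i)$ holds as soon as (i) no Brownian loop meeting $\H\setminus H$ touches $\gamma'(i)$ --- an event of conditional probability $\exp\bigl(-cL(\gamma'(i),\H\setminus H;\H)\bigr)$ given $\gamma'(i)$, by the Poisson property of the added loops --- and (ii) no additional disjoint cluster is created that goes around $\gamma'(i)$, whose conditional probability tends to $0$ under the pinning $d(0,\gamma'(i))\le\eps$ by the BK inequality. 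Letting $\eps\to 0$ yields that the Radon--Nikodym derivative of $P^i$ with respect to its $H$-version equals $\exp\bigl(-cL(\gamma,\H\setminus H;\H)\bigr)$ on $\{\gamma\subset H\}$, and comparison with Step~1 then forces $c=c(\kappa)$. This coupling computation is the concrete content your Step~2 is missing.
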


Note that in our proof, we will not really use the description of CLE$_\kappa$ via branching SLE$_{\kappa, \kappa -6}$. We will only use the description of the conformal loop-ensemble via its pinned loop measure, as described in \cite {sheffieldwerner1}, which we now briefly recall.

Suppose that $\overline \Gamma$ is a random loop ensemble that satisfies the conformal restriction axioms. Consider its version in the upper half-plane $\H$, and consider the loop
$\gamma (i)$ of $\overline \Gamma$ that surrounds $i$ (it is very easy to see that this loop almost surely exists, see \cite {sheffieldwerner1}). Let us now consider the law of $\gamma(i)$ conditioned on the event $\{d( 0, \gamma (i)) \le \eps \}$. It is shown in \cite {sheffieldwerner1} that this law converges when $\eps \to 0$ to some limit
$P^i$, and furthermore that for some $\kappa \in (8/3, 4]$, $P^i$ is equal to an SLE-excursion law $P^{i, \kappa}$ that we will describe in the next paragraph. Furthermore, when this is the case, it turns out that the entire family $\overline \Gamma$ is a CLE$_\kappa$ for this value of $\kappa$.

Consider an SLE$_\kappa$ in the upper half-plane, started from the point $\eps >0$ on the real axis to the point $0$ (on the real axis as well). Such an SLE path will typically be very small when $\eps$ is small. However, one can show that the limit when $\eps \to 0$ of (the law of) this SLE, conditioned on disconnecting $i$ from $\infty$ in the upper half-plane exists. This limit (i.e., its law) is what we call $P^{i, \kappa}$.

Conformal invariance of SLE$_\kappa$ enables to define an analogous measure in other simply connected domains. Suppose for instance that
$H = \{ z \in \H \ : \ | z| < 3 \}$. We can again consider the limit of the law of SLE$_\kappa$ in $H$ from $\eps$ to $0$, and conditioned to disconnect
$i$ from $3i$. This limit is a probability measure $P_H^{i, \kappa}$ that can also be viewed as the image of $P^{i , \kappa}$ under the conformal map
$\Phi$ from $\H$ onto $H$ that keeps the points $0$ and $i$ invariant.  Note that the same argument holds for other choices of $H$, but choosing this particular one will be enough $H$ to identify the relationship between $c$ and $\kappa$.

One can use SLE techniques to compare ``directly'' the laws of an SLE $\gamma$ from $\eps$ to $0$ in $\H$ and of an SLE $\gamma'$ also from $\eps$ to $0$ in $H$.
More precisely, the SLE martingale derived in \cite {LSWr} show that the Radon-Nikodym of
the former with respect to the latter is a multiple of
$$ \exp  ( - c L ( \gamma, \H \setminus H ; \H ))$$
on the set of loops $\gamma$ that stay in $H$ (recall that $L(A,A';D)$ denotes the $\mu$-mass of the set of Brownian loops in $D$ that intersect both $A$ and $A'$),
where
$$  c = c (\kappa) =\frac {(3\kappa-8)(6-\kappa)}{2\kappa}.$$
This absolute continuity relation is valid for all $\eps$, and it therefore follows that it still holds after passing to the previous limit $\eps \to 0$, i.e., for the
two probability measures $P^{i, \kappa}$ and $P^{i, \kappa}_H$. This can be viewed as a property of $P^{i, \kappa}$ itself because $P^{i, \kappa}_H$ is the conformal image of $P^{i, \kappa}$ under $\Phi$.

Note that the function $\kappa \mapsto c(\kappa)$ is strictly increasing on the interval $(8/3, 4]$. Only one value of $\kappa$ corresponds to each value of $c \in (0,1]$.
Hence, in order to identify the value $\kappa$ associated to a family $\overline \Gamma$ of loops satisfying the conformal restriction axioms, it suffices to check that the probability measure $P^i$ satisfies the corresponding absolute continuity relation for the corresponding value of $c$.

\begin{proof}
Suppose that $c$ is subcritical, and that $\overline \Gamma$ is the corresponding family (of outer boundaries of outermost clusters).
By restricting ourselves to the loops in the loop-soup that stay in $H$, we can define on the same probability space, a sample of the loop-soup in $H$.
Let $\gamma(i)$ denote the loop in $\overline \Gamma$ that surrounds $i$, and let $\gamma'(i)$ denote the loop in $\overline \Gamma'$ that surrounds $i$.
Let us start with sampling the loop-soup in $H'$ (which defines $\gamma'(i)$). Then, on the top of that, in order to obtain the loop-soup in $\H$, we only add loops that intersect $\H \setminus H$. Note that if $\gamma(i) \not= \gamma'(i)$, then $\gamma(i) \notin H$. Furthermore, in order for
$\gamma (i) = \gamma'(i)$ it suffices that:
\begin {itemize}
 \item One did not add any loop in the loop-soup that intersects $\gamma'(i)$ (note that this probability -- conditionally on $\gamma'(i)$ --  is
$\exp ( - c L ( \gamma'(i), \H \setminus H ; \H )$).
\item One did not create another disjoint cluster that goes ``around'' $\gamma'(i)$. When $\gamma'(i)$ already intersects the disk of radius $\eps$, the conditional probability that one creates such an additional cluster goes to $0$ as $\eps \to 0$ (using the BK inequality for Poisson point processes, as in the proof of Lemma \ref{exponentialcrossingdecay}.
\end {itemize}
If we now condition on the event that $\gamma'(i)$ intersects the disk of radius $\eps$ and let $\eps \to 0$, it follows that under the limiting law $P^i$ satisfies the absolute continuity relation that we are after: On the set of curves $\gamma$ that stay in $H$, the Radon-Nikodym derivative of $P^i$ with respect to the measure defined directly  in $H$ instead of $\H$ is $\exp ( - c L ( \gamma, \H \setminus H ; \H )$).
Hence, it can only be $P^{ i, \kappa}$ for $c=c(\kappa)$.
\end {proof}

This identification allows us to give a short proof of the following fact, which will be instrumental in the next section:

\begin {proposition}
 \label {p.cosubcritical}
$c_0$ is subcritical.
\end {proposition}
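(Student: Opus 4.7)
The plan is to argue by contradiction using a monotone Poisson coupling together with the $\mathrm{CLE}_{\kappa(c)}$ identification for $c<c_0$. Construct the family $\{\Gamma_c : 0 < c \leq c_0\}$ by attaching to each loop of $\Gamma_{c_0}$ an independent uniform $(0,c_0)$ mark and retaining it in $\Gamma_c$ iff its mark is $\leq c$; then $\Gamma_c \subset \Gamma_{c'}$ for $c \leq c'$ and $\Gamma_{c_0} = \bigcup_{c<c_0}\Gamma_c$ almost surely. By the remark following Lemma~\ref{l.firstpcdef}, to show $c_0$ is subcritical it suffices to verify property~(b) at $c=c_0$: almost surely no cluster of $\Gamma_{c_0}$ touches $\partial\U$.

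For every $c<c_0$, Proposition~\ref{p.ckappa} identifies $\overline{\Gamma}_c$ with $\mathrm{CLE}_{\kappa(c)}$, where $\kappa(c)\in(8/3,4]$ is determined by $c=(3\kappa-8)(6-\kappa)/(2\kappa)$. Since this map is continuous and strictly increasing, $\kappa(c)\uparrow\kappa^*\leq 4$ as $c\uparrow c_0$, and $c_0=(3\kappa^*-8)(6-\kappa^*)/(2\kappa^*)\leq 1$. Working in the upper half-plane picture via conformal invariance, let $\gamma_c$ denote the outermost cluster-boundary loop of $\overline{\Gamma}_c$ surrounding $i$ and $D_c$ its filling; by the coupling, $D_c$ is monotone increasing in $c$, with limit $D^*:=\bigcup_{c<c_0}D_c$.

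The key step is to identify $\partial D^*$ as a $\mathrm{CLE}_{\kappa^*}$ loop at the critical level, by replaying the pinned-loop argument of Proposition~\ref{p.ckappa} with $c=c_0$. The Radon--Nikodym density $\exp(-c\,L(\gamma,\H\setminus H;\H))$ depends continuously on $c$, and the loop-soup mass $L(\cdot,\cdot;\cdot)$ is finite on the relevant events, so the monotone coupling and dominated convergence extend the identification to $c=c_0$: the pinned loop of $\overline{\Gamma}_{c_0}$ around $i$ follows the SLE$_{\kappa^*}$-excursion law and is thus almost surely a simple loop bounded away from $\partial\H$. By Moebius invariance the same is true of the cluster-boundary loop around every fixed interior point, and a Fubini argument (the expected area of the union of interiors of cluster loops of $\Gamma_{c_0}$ touching $\partial\U$ vanishes, while each such loop has positive interior area) upgrades this to~(b).

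The main obstacle is justifying the limit passage in the pinned-loop identification; specifically, one needs to ensure the limit loop does not degenerate. This reduces to a uniform lower bound on the conformal radius of $D_c$ from $i$ as $c\uparrow c_0$, ensuring $D^*\subsetneq\H$ almost surely. Such a bound is supplied by the explicit $\mathrm{CLE}_\kappa$ conformal-radius distribution (for which $\E[-\log R]$ remains bounded as $\kappa$ ranges over $(8/3,4]$) combined with monotone convergence, yielding $R^*:=\lim R_c>0$ a.s. The sharper upgrade from positive conformal radius to positive Euclidean distance from $\partial\U$ is then delivered by the SLE$_{\kappa^*}$-excursion identification itself.
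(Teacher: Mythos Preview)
Your overall framework --- the monotone Poisson coupling, the $\mathrm{CLE}_{\kappa(c)}$ identification for $c<c_0$, and the reduction via the remark after Lemma~\ref{l.firstpcdef} to showing that the cluster around $i$ stays off the boundary at $c_0$ --- matches the paper's setup. The gap is in your ``key step.''

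You propose to \emph{identify} the limiting loop $\partial D^*$ (equivalently, the pinned loop of $\overline\Gamma_{c_0}$) as an $\mathrm{SLE}_{\kappa^*}$ object by ``replaying the pinned-loop argument of Proposition~\ref{p.ckappa} with $c=c_0$.'' But that argument, and the result of \cite{sheffieldwerner1} it feeds into, presuppose that $\overline\Gamma_{c_0}$ consists of disjoint simple loops bounded away from $\partial\U$ and satisfies the conformal restriction axioms --- i.e.\ that $c_0$ is subcritical, which is exactly what you are trying to prove. Knowing only that the conformal radius $R^*>0$ a.s.\ does not tell you $\partial D^*$ is a simple curve, nor that it avoids $\partial\H$; a set can have positive conformal radius from $i$ and still touch the real line. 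So your passage from ``$D^*\subsetneq\H$'' to ``the pinned-loop identification extends to $c_0$'' is not justified, and the subsequent ``sharper upgrade \ldots\ delivered by the $\mathrm{SLE}_{\kappa^*}$-excursion identification itself'' is then circular.

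What the paper supplies in place of this is an FKG/monotonicity comparison that never requires identifying anything at $c_0$. For each subcritical $c<c_0$, the Euclidean distance $d(c)$ from the cluster around $i$ to the segment $[1,2]$ is a \emph{decreasing} functional of the loop configuration, while the event $E_\eps=\{\text{that cluster meets the $\eps$-disk at }0\}$ is \emph{increasing}; hence $d(c)$ is negatively correlated with $E_\eps$, and letting $\eps\to 0$ gives the stochastic domination
\[
\mathbb P\bigl(d(c)\ge x\bigr)\ \ge\ P^{\,i,\kappa(c)}\bigl(d(\gamma,[1,2])\ge x\bigr)
\]
for every $x>0$. The right-hand side is controlled uniformly as $\kappa(c)\uparrow\kappa_0\le 4$ because the $\mathrm{SLE}_{\kappa_0}$ excursion a.s.\ avoids $[1,2]$; sending $c\uparrow c_0$ and then $x\downarrow 0$ yields $d(c_0)>0$ a.s. This correlation-inequality step --- linking the \emph{unconditional} distance to the \emph{pinned} SLE law for each fixed $c<c_0$ --- is the idea your argument is missing.
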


Note that the standard arguments developed in the context of Mandelbrot percolation
(see \cite{MR1409145,MR931500}) can be easily adapted to the present setting in order to prove that $c_0$ is subcritical, but not in the sense we have defined it. It shows for instance easily that at $c_0$, there exist paths and loops that intersect no loop in the loop-soup, but non-trivial additional work would then be required in order to deduce that no cluster touches the boundary of the domain. Since we have this identification via SLE$_\kappa$ loops at our disposal, it is natural to prove this result in the following way:

\begin {proof}
Let $C_c$ be the outermost cluster surrounding $i$ if the loop-soup (in $\H$) has intensity $c$.
If we take the usual coupled Poisson point of view in which the set $\Gamma = \Gamma(c)$ is increasing in $c$ (with loops ``appearing'' at random times up to time $c$) then we have by definition that almost surely $C_{c_0} = \cup_{c < c_0} C_c$ (this is simply because, almost surely, no loop appears exactly at time $c_0$).  Let $d(c)$ denote the
Euclidean distance between $\overline C_c$ and the segment $[1, 2]$.
Clearly $d(c) > 0 $ almost surely for each $c < c_0$ and $d(c_0) = \lim_{c \to c_0-} d(c)$.

By the remark after Lemma \ref{l.firstpcdef}, we know that in order to prove that $c_0$ is subcritical, it suffices to show that $d(c_0) > 0$ almost surely (by Moebius invariance, this will imply that almost surely, no cluster touches the boundary of $\H$).
Note that $d$ is a non-increasing function of the loop-soup configuration (i.e., adding more loops to a configuration can not increase the corresponding distance $d$).
Similarly, the event $E_\eps$ that the outermost cluster surrounding $i$ does intersect the $\eps$-neighborhood of the origin is an increasing event (i.e., adding more loops to a configuration can only help this event to hold).  Hence it follows that for each $\eps$, the random variable $d(c)$ is negatively correlated with the event $E_\eps$.
Letting $\eps \to 0$, we get that (for subcritical $c$),
 the law of $d(c)$ is ``bounded from below'' by the law of the distance between the curve $\gamma$ (defined under the probability measure $P^{i, \kappa}$) and $[1,2]$, in the sense that for any positive $x$,
$$ P ( d(c) \ge  x ) \ \ge \  P^{i, \kappa} ( d ( \gamma, [1,2]) \ge x ).$$
But we also know that $c_0 \le 1$, so that $\kappa_0 := \lim_{c \to c_0-} \kappa (c) \le 4$.
It follows readily that for all $c < c_0$ and for all $x$,
$$ \lim_{c \to c_0-} P ( d(c) \ge  x ) \ \ge\  \lim_{\kappa \to \kappa_0-} P^{i, \kappa} ( d( \gamma, [1,2]) \ge  x )
\ \ge\  P^{i, \kappa_0} ( d(\gamma, [1,2] ) \ge 2x).$$
But we know that for any $\kappa \le 4$, the SLE excursion $\gamma$ stays almost surely away from $[1,2]$.
Putting the pieces together, we get indeed that
\begin {eqnarray*}
P ( d (c_0)  > 0 )  &= & \lim_{x \to 0+} P ( d(c_0) \ge  x )
\ \ge \  \lim_{x \to 0+} \lim_{c \to c_0-} P ( d (c) \ge x )  \\
&\ge&  \lim_{x \to 0+} P^{i, \kappa_0} ( d ( \gamma, [1,2] ) \ge 2x)
\ \ge \  P^{i, \kappa_0} ( d( \gamma, [1,2] ) > 0 )
 \  =   \  1 .
\end {eqnarray*}
\end {proof}

\section{Identifying the critical intensity $c_0$}
\label {S.4}

\subsection {Statement and comments}

The statements of our main results, Theorems \ref{loopsoupsatisfiesaxioms} and
\ref{kappacorrespondence}, are mostly contained in the results of the previous section:
Corollary \ref{CLEcor}, Proposition \ref{p.ckappa}, Proposition \ref{p.cosubcritical}.  It remains only to prove the following statement:

\begin{proposition} \label{criticaltheorem}
The critical constant of Lemma \ref{l.firstpcdef} is $c_0 = 1$ (which corresponds to $\kappa = 4$, by Proposition \ref{p.ckappa}).
\end{proposition}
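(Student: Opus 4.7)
The plan is to combine the results already established in Sections \ref{S.2} and \ref{S.3} with a quantitative SLE estimate to pin down $c_0$. By Proposition \ref{p.cosubcritical}, $c_0$ is itself subcritical; by Corollary \ref{CLEcor} and Proposition \ref{p.ckappa}, the ensemble $\overline \Gamma$ at intensity $c_0$ is a CLE$_{\kappa_0}$ with $c_0 = (3\kappa_0-8)(6-\kappa_0)/(2\kappa_0)$ for some $\kappa_0 \in (8/3, 4]$. Since $\kappa \mapsto c(\kappa)$ is a bijection from $(8/3,4]$ onto $(0,1]$, proving $c_0=1$ is the same as proving $\kappa_0 = 4$. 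The strategy is to show by contradiction that $\kappa_0 < 4$ is impossible: one should establish that any subcritical intensity $c$ whose associated $\kappa$ lies strictly below $4$ is \emph{strictly} subcritical, i.e.\ one can still add a positive amount $\delta > 0$ of extra loop intensity while keeping the ensemble subcritical in the sense of Lemma \ref{l.firstpcdef}.

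For the first ingredient I would invoke the SLE$_\kappa$ excursion-size estimate from \cite{sheffieldwerner1} highlighted at the end of the Introduction. For $\kappa < 4$, an SLE$_\kappa$ excursion from $\eps$ to $0$ in $\HH$ conditioned to disconnect $i$ from $\infty$ has polynomial control on its approach to the positive real axis: under $P^{i,\kappa}$ one has $P^{i,\kappa}( d(\gamma, [1,2]) \le x) \le C x^{\alpha(\kappa)}$ with some $\alpha(\kappa) > 0$, and in fact $\alpha(\kappa) \to 0$ only as $\kappa \to 4-$. Via the CLE$_{\kappa}$ identification, this translates directly into a quantitative bound: at intensity $c = c(\kappa)$, the outermost cluster $\overline C_i$ surrounding $i$ in the loop-soup in $\HH$ satisfies $\P(d(\overline C_i, \mathbb{R}) \le x) \le C x^{\alpha(\kappa)}$, together with analogous estimates for the distance between the cluster and any fixed boundary segment. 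This is the ``certain decay rate property'' referred to in the introduction, and it holds for every subcritical $c$ with $\kappa < 4$, and in particular at $c = c_0$ if we are assuming $\kappa_0 < 4$.

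The second ingredient converts this decay into strict subcriticality. By the remark after Lemma \ref{l.firstpcdef}, it suffices to prove that at intensity $c_0 + \delta$ no cluster touches $\p \HH$ almost surely. Couple the intensity-$(c_0+\delta)$ loop-soup to the intensity-$c_0$ loop-soup by adding an independent soup $\Gamma'$ of intensity $\delta$. The only way a new cluster can reach $\p \HH$ is by chaining together a sequence of old clusters via loops of $\Gamma'$. I would use a multiscale/Peierls argument: tile a neighbourhood of $\p \HH$ in $\HH$ by dyadic annuli of scales $2^{-n}$, use the polynomial decay from Step 1 to bound the number of $c_0$-clusters of scale $2^{-n}$ that approach a fixed boundary ball, and use the BK inequality for Poisson point processes together with Lemma \ref{exponentialcrossingdecay} to show that the probability of completing a chain of $k$ clusters of $c_0$-loops, linked by loops of $\Gamma'$, across a fixed macroscopic annulus decays exponentially in $k$ with a rate controlled by $\delta$. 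Choosing $\delta$ small enough makes the expected number of boundary-reaching chains summable over all dyadic scales, so Borel--Cantelli gives that no cluster reaches $\p \HH$.

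The main obstacle is this second ingredient: making precise the claim that the polynomial-decay bound on individual $c_0$-cluster sizes survives under the addition of a small independent loop-soup, uniformly in all scales. One has to carefully balance two competing effects. Small loops in $\Gamma'$ are plentiful but only merge nearby clusters, which is controlled by the local finiteness of $\overline \Gamma$; large loops in $\Gamma'$ can create long-range bridges but are rare (their $\mu$-mass is finite away from the boundary), and Step 1 guarantees that the clusters they can hit are well-separated from $\p \HH$ with polynomial probability. The scale-invariance of $\mu$ means the same estimate applies at every scale, and the bookkeeping reduces to a geometric series whose common ratio is $<1$ once $\delta$ is small, and $\alpha(\kappa_0) > 0$. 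Once this step is done, the contradiction $c_0 + \delta$ subcritical $> c_0$ forces $\kappa_0 = 4$ and therefore $c_0 = c(4) = 1$.
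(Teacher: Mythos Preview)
Your overall strategy matches the paper's exactly: one already knows $c_0 \le 1$ from \cite{sheffieldwerner1}, and to rule out $c_0<1$ one assumes $\kappa_0<4$, invokes an SLE$_{\kappa_0}$ moment estimate from \cite{sheffieldwerner1}, and argues that an independent extra soup of small intensity $\delta$ cannot glue the $c_0$-clusters into a single cluster. So the architecture is right.

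The gap is precisely where you flag it yourself: your ``second ingredient'' is a sketch, and the specific mechanism you propose (a multiscale Peierls/BK chain-counting argument near $\partial\H$) does not obviously close. The difficulty is that once you start alternately attaching $\overline\Gamma_0$-clusters and $\overline\Gamma'$-loops, the growing set is no longer a single CLE loop, so the one-cluster distance estimate $P(d(\overline C_i,\R)\le x)\le Cx^{\alpha}$ from Step~1 does not directly control it. A chain of length $k$ is \emph{not} a configuration with $k$ disjoint witnesses in the BK sense, so Lemma~\ref{exponentialcrossingdecay} does not give the exponential decay you want; and the successive clusters along a chain are correlated through the common soup $\Gamma_0$, so a naive product bound is not available either. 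The paper in fact remarks that ``the usual quantities such as harmonic measure or capacity turn out not to be well-suited here,'' and your dyadic-annulus bookkeeping is essentially a harmonic-measure/diameter accounting, which is exactly what fails.

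What the paper does instead is introduce an \emph{ad hoc} size functional that contracts under the iteration. For $\alpha=\alpha(\kappa_0)\in(0,1)$ coming from the SLE moment bound, set
\[
M_\alpha(A)=\lim_{s\to\infty} s\,\E\bigl[(\Im B^{is}_{\tau(A)})^{\alpha}\bigr],
\]
an interpolation between harmonic measure ($\alpha\to 0$) and half-plane capacity ($\alpha=1$). This $M_\alpha$ is subadditive under unions, monotone, scales like $a^{1+\alpha}$, and is comparable to a sum over the hyperbolic dyadic squares meeting $A$. The crucial lemma (Lemma~\ref{generalNbound}) is that if $\tilde A$ denotes the hull obtained by attaching to $A$ all CLE loops that hit it, then
\[
\E\bigl[M_\alpha(\Phi_A(\tilde A))\bigr]\le C_1(c)\,M_\alpha(A),
\]
with $C_1(c)\to 0$ as $c\to 0$. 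One then iterates: start from a fixed hull $A_0$, alternately attach $\overline\Gamma_0$-loops and $\overline\Gamma'$-loops, conformally mapping down at each step. Choosing $\delta$ small makes $C_1(c_0)C_1(\delta)<1$, so $\E[M_\alpha(A_n)]$ decays geometrically; Borel--Cantelli and the relation $\hcap\le M_\alpha$ (for low hulls) give $\sum_n \hcap(A_n)<\infty$, hence the total growth has bounded half-plane capacity and cannot reach a far horizontal line. This is the contradiction.

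In short, your plan is missing the one genuinely new idea: the $\alpha$-capacity $M_\alpha$ together with the conformal-iteration contraction estimate. Without it (or an equivalent device), the passage from the single-cluster moment bound to control of arbitrarily long alternating chains is not justified.
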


Propositions \ref{p.satisfiesconformal}, \ref {p.ckappa} and the main result of \cite{sheffieldwerner1} already imply that we cannot have $c_0 > 1$,
since in this case the $\overline \Gamma$  corresponding to $c \in (1, c_0)$ would give additional random loop collections satisfying the conformal axioms (beyond the CLE$_\kappa$ with $\kappa \in (8/3,4]$), which was ruled out in \cite{sheffieldwerner1}.  It remains only to rule out the possibility that $c_0 <1$.

 The proof of this fact is not straightforward, and it requires some new notation and several additional lemmas. Let us first outline the very rough idea. Suppose that $c_0 < 1$. This means that at $c_0$, the loop-soup cluster boundaries are described with $SLE_\kappa$-type loops for $\kappa = \kappa ( c_0) < 4$. Certain estimates on SLE show that SLE curves for $\kappa < 4$ have a probability to get $\epsilon$ close to a boundary arc of the domain that decays quickly in $\epsilon$, and that this fails to hold for SLE$_4$.  In fact, we will use a very closely related result from \cite{sheffieldwerner1} that
roughly shows that the probability that the diameter of the set of loops intersecting a small ball on the boundary of $\H$ is large,  decays quickly with the size of this ball when $\kappa < 4$.
  Hence, one can intuitively guess that when $\kappa < 4$, two big clusters will be unlikely to be very close, i.e., in some sense, there is ``some space'' between the clusters. Therefore, adding a loop-soup with a very small intensity $c'$
on top of the loop-soup with intensity $c_0$ might not be sufficient to make all clusters percolate, and this would contradict the fact that $c_0$ is critical.

We find it convenient in this section to work with a loop-soup in the upper half plane $\H$ instead of the unit disk.
To show that there are distinct clusters in such a union of two CLEs $\overline \Gamma$ and $\overline \Gamma'$,
we will start with the semi-disk $A_1$ of radius $1$ centered at the origin.
We then add all the loops in $\overline \Gamma$ that hit $A_1$, add the loops in ${\overline \Gamma}'$ that hit
those loops, add the loops in $\overline \Gamma$ that hit those loops, etc. and try to show that in some sense the size of this
growing set remains bounded almost surely. The key to the proof is to find the right way to describe this ``size'', as the usual quantities
such as harmonic measure or capacity turn out not to be well-suited here.

\subsection {An intermediate way to measure the size of sets}

We will now define a generalization of the usual half-plane capacity.
Suppose that $\alpha \in (0,1]$, and that $A$ is a bounded closed subset of the upper half-plane $\H$. We define
\begin{equation} \label{hcappsi} M(A) = M_\alpha (A) := \lim_{s \to \infty} s \E ( (\Im B^{is}_{\tau(A)})^\alpha),\end{equation}
where $B^{is}$ is a Brownian motion started at $is$ stopped
at the first time $\tau(A)$ that it hits $A \cup \R$.  Note that $M_1= \hcap$ is just the
usual half-plane capacity used in the context of chordal Loewner chains, whereas $\lim_{\alpha \to 0+} M_\alpha (A)$ is the harmonic
measure of $A \cap \HH$ ``viewed from infinity''.
Recall that standard properties of planar Brownian motion imply that
the limit in (\ref {hcappsi})
 necessarily exists, that it is finite, and that for some universal constant $C_0$ and for any $r$ such that $A$ is a subset of the disk of radius $r$ centered at the origin, the limit is equal to
$C_0 r^{-1} \times \E ( (\Im B_{\tau(A)})^\alpha )$
where the Brownian motion $B$ starts at a random point $r e^{i \theta}$, where $\theta$ distributed according to the density $\sin (\theta)  d \theta / 2$ on $[0, \pi]$.

A {\em hull} is defined to be a bounded closed subset of $\H$ whose complement in $\H$ is simply connected.
The union of two hulls $A$ and $A'$ is not necessarily a hull, but we denote by $A \underline \cup A'$ the hull whose
complement is the unbounded component of $\H \setminus (A \cup A')$.

When $A$ is a hull, let us denote by $\Phi_A: \H \setminus A \to \H$ the conformal map normalized at infinity by  $\lim_{z \to \infty} \Phi_A(z) - z = 0$.
 Recall that for all $z \in \HH \setminus A$, $\Im ( \Phi_A (z) ) \le \Im ( z)$.
Then, when $A'$ is another hull, the set $\Phi_A ( A' \cap ( \H \setminus A))$ is not necessarily a hull. But we can still define the unbounded connected component of its complement in the upper half-plane, and take its complement. We call it $\Phi_{A} (A')$ (by a slight abuse of notation).

It is well-known and follows immediately from the definition of half-plane capacity that for any bounded closed $A$ and any positive $a$,
$\hcap (aA) = a^2 \hcap(A)$. Similarly, for any two $A$ and $A'$, $\hcap(A \cup A') \leq \hcap(A) +\hcap(A')$.
Furthermore $\hcap$ is increasing with respect to $A$, and behaves additively with repect to composition of conformal maps for hulls.

We will now collect easy generalizations of some of these four facts.
 Observe first that
for any positive $a$ we have
\begin{equation}\label{Mscaling} M(a A) = a^{\alpha+1} M(A). \end{equation}
Similarly, we have that for any two (bounded closed) $A$ and $A'$,
\begin {equation}
 \label{Munionsubadditive}
M(A  \cup A') \leq M(A) + M(A').
\end {equation}
This follows from the definition \eqref{hcappsi} and the fact that for each sample of the Brownian motion we have
$$(\Im B_{\tau(A  \cup A')})^\alpha = (\Im B_{\tau(A) \wedge \tau(A')})^\alpha \leq  (\Im B_{\tau(A)})^\alpha +  (\Im B_{\tau(A')})^\alpha.$$
Applying the optional stopping time theorem to the local supermartingale $(\Im B_t)^\alpha$, we know that
\begin{equation} \label{Msubset}
A \subset A' \hbox{ implies }  M(A) \leq M(A'),
\end{equation}
since $$\E\bigl((\Im B_{\tau(A) })^\alpha | B_{\tau(A')} \bigr) \leq (\Im B_{\tau(A')})^\alpha.$$
We next claim that
for any three given hulls $A'$, $A_1$ and $A_2$, we have
\begin {equation} \label{Mtriple}
 M(\Phi_{A_1 \underline \cup A_2} (A')) \le M (\Phi_{A_1} (A')).
\end {equation}
To verify the claim, note first that for $A_3= \Phi_{A_1} (A_2)$, we have $\Phi_{A_3} \circ \Phi_{A_1} = \Phi_{A_1 \underline \cup A_2}$.
Recall that $\Im \Phi_{A_3} (z) \leq \Im(z)$ for all $z \in \H$, so that in particular, $\Im (\Phi_{A_3} (B_{\sigma})) \leq \Im (B_{\sigma})$ where
$\sigma$ is the first hitting time of $\Phi_{A_1} (A')$ by the Brownian motion.  We let the starting point of the Brownian motion tend to infinity as before, and the claim follows.

\medbreak

It will be useful to compare $M(A)$ with some quantities involving dyadic squares and rectangles that $A$ intersects.  (This is similar in spirit to the estimates for half-plane capacity in terms of hyperbolic geometry given in \cite{LLN}.)  We will consider the usual hyperbolic tiling of $\H$ by squares of the form $[a 2^j, (a+1) 2^j] \times [2^j, 2^{j+1}]$, for integers $a,j$.
Let $\mathcal S$ be the set of all such squares.
For each hull $A$, we define $\mathcal S (A)$ to be the set of squares in $\mathcal S$ that $A$ intersects, and we let $\hat A$ be the union of these squares, i.e.
$$ \hat A = \cup_{S \in \mathcal S (A)} S.$$
\begin {figure}[htbp]
\begin {center}
\includegraphics [width=5in]{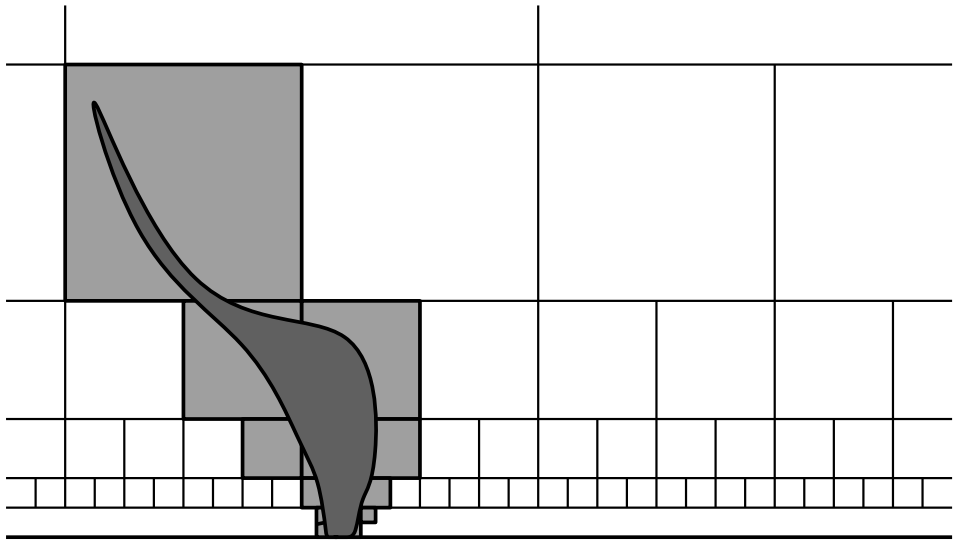}
\caption {A hull $A$ and its corresponding $\hat A$}
\end {center}
\end {figure}

\begin{lemma}\label{MhatMagree}
There exists a universal positive constant $C$ such that for any hull $A$,
$$ C M ( \hat A) \le M(A) \le M( \hat A).$$
\end{lemma}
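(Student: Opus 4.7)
The upper bound $M(A)\le M(\hat A)$ is immediate: $A\subseteq \hat A$ and \eqref{Msubset} apply verbatim.

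For the lower bound I would work directly with the probabilistic definition \eqref{hcappsi} and the strong Markov property. Let $B$ be a Brownian motion started at $is$, write $\tau=\tau(\hat A\cup\R)$ and $\sigma=\tau(A\cup\R)$; since $A\subseteq\hat A$ we have $\tau\le\sigma$, and by strong Markov
$$\E\bigl[(\Im B_\sigma)^\alpha\bigr]=\E\bigl[h(B_\tau)\bigr],\qquad h(z):=\E_z\bigl[(\Im B_\sigma)^\alpha\bigr].$$
The whole proof reduces to the pointwise estimate
\begin{equation}\label{keybd}
h(z)\ge C\,(\Im z)^\alpha \quad\text{for every }z\in\hat A,
\end{equation}
for a universal constant $C>0$. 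Indeed, since $B_\tau\in\hat A\cup\R$ and the two sides of \eqref{keybd} vanish on $\R$, integrating \eqref{keybd} against the law of $B_\tau$, multiplying by $s$ and letting $s\to\infty$ yields $M(A)\ge C\,M(\hat A)$.

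To prove \eqref{keybd}, fix $z\in S$ for some $S\in\mathcal S(A)$ at dyadic scale $2^j$, so $\Im z\asymp 2^j$, and pick any $w\in A\cap S$. By the scale and translation invariance of Brownian motion, we may rescale so that $S$ is the unit square $[0,1]\times[1,2]$. Consider the enlarged box $S^\sharp=[-1,2]\times[1/2,3]$. A standard coupling/gambler's-ruin argument (or a direct Harnack/Beurling estimate on $S^\sharp$, using that $w\in S\cap A$ forces $A$ to be a non-trivial closed set meeting the compact square $\overline S$) shows that with probability bounded below by a universal constant $c_1>0$, the Brownian motion from $z$ hits $A$ before exiting $S^\sharp$. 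On this event $\Im B_\sigma \ge 1/2$, hence $(\Im B_\sigma)^\alpha\ge(1/2)^\alpha\ge 1/2$. Undoing the rescaling gives \eqref{keybd}.

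The main obstacle is making the ``hit $A$ before leaving $S^\sharp$ with definite probability'' step robust: if $A\cap S$ were, say, a single point, the hitting probability could be zero. In the setting of the paper, however, the hulls of interest are fillings of connected (loop-soup cluster) sets, so $A\cap S$ is either a single arc of positive diameter or the whole square is swallowed by $A$; in either case a Beurling-type bound or an explicit harmonic-measure computation in $S^\sharp\setminus A$ gives the required uniform lower bound. I would formalize this by replacing $A\cap S$ by a crosscut or an interval contained in $A$, for which harmonic measure from $z$ is classical. Once \eqref{keybd} is in place, the lemma is immediate.
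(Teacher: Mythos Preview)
Your reduction via the strong Markov property to the pointwise bound \eqref{keybd} is exactly the framework the paper uses (though the paper phrases it more informally). The gap is in your proof of \eqref{keybd}. You correctly flag the obstacle: a Beurling or Harnack bound on the hitting probability of $A$ before exiting $S^\sharp$ requires $A\cap S^\sharp$ to have \emph{definite size}, and nothing you have written guarantees this. Your proposed workaround --- restricting to hulls that arise as fillings of loop-soup clusters and assuming $A\cap S$ is ``a single arc of positive diameter'' --- does not rescue the argument: the lemma is stated (and later applied, via Lemma~\ref{mhatsum} and Lemma~\ref{generalNbound}, to the iterated hulls $A_{n+1}=\Phi_{A_n}(\tilde A_n)$) for \emph{arbitrary} hulls, and even for loop-soup--related hulls there is no reason the intersection with a fixed dyadic square should be an arc of uniformly positive diameter.

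What you are missing is that the \emph{hull} hypothesis already gives you the needed largeness for free, and this is precisely what the paper exploits. Since $\H\setminus A$ is simply connected, any closed curve in $\H\setminus A$ has winding number zero about every point of $A$. Hence if the Brownian motion, after landing in $S$ at time $\tau$, makes a full loop around $S$ while remaining in the union of $S$ and the neighbouring dyadic squares (an event of universal positive probability), that loop encircles a point of $A\cap S$ and therefore \emph{must} intersect $A$; moreover the intersection occurs at height comparable to $2^j$. This yields \eqref{keybd} with no assumption on the structure of $A\cap S$. Equivalently, you could observe that each connected component of $\overline A$ meets $\R$, so the component through $p\in A\cap S$ gives a connected subset of $A$ crossing from the interior of $S^\sharp$ to its bottom edge, and then your Beurling estimate applies legitimately. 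Either way, the hull property is the missing ingredient.
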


\begin{proof}
Clearly, $M (\hat A) \ge M (A)$ by \eqref{Msubset} since $A \subset \hat A$. On the other hand,
if we stop a Brownian motion at the first time it hits $\hat A$ i.e. a square $S$ of
$\mathcal S(A)$, then it has a bounded probability of later hitting $A$ at a point of about the same height, up to constant factor: This can
be seen, for example, by bounding below the probability that (after this hitting time of $\hat A$) the Brownian motion makes a loop around $S$ before it hits any square
of $\mathcal S$ that
is not adjacent to $S$, which would in particular imply that it hits $A$ during that time. This probability is universal, and the lemma follows.
\end{proof}

\begin{lemma}\label{mhatsum}
There exists a universal positive constant $C'$ such that for any hull $A$,
$$
C' \sum_{S \in {\cal S}(A)} M (S)
\le
M (A)
\le
\sum_{S \in {\cal S}(A)} M (S) .
$$
\end{lemma}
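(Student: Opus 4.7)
The upper bound $M(A) \le \sum_{S \in \mathcal S(A)} M(S)$ is immediate from properties established earlier in the paper: since $A \subset \hat A$, monotonicity \eqref{Msubset} gives $M(A) \le M(\hat A)$, and iterating the two-set subadditivity \eqref{Munionsubadditive} together with monotone convergence (which is justified by applying \eqref{Msubset} to the increasing finite sub-unions) yields $M(\hat A) \le \sum_{S \in \mathcal S(A)} M(S)$.

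For the lower bound, by Lemma \ref{MhatMagree} it suffices to show $M(\hat A) \ge c\sum_{S \in \mathcal S(A)} M(S)$ for a universal $c > 0$. Writing $\tau = \tau(\hat A)$ and decomposing in the definition \eqref{hcappsi} according to which tile is first hit, then using that $\Im B^{is}_{\tau} \in [h_S, 2h_S]$ on the event $\{B^{is}_\tau \in S\}$ (where $h_S = 2^{j(S)}$ denotes the side length of $S$), one obtains
$$M(\hat A) \ge \sum_{S \in \mathcal S(A)} h_S^\alpha\, \omega^\infty_{\hat A}(S), \qquad M(S) \le (2h_S)^\alpha\, \omega^\infty_S(S),$$
where $\omega^\infty_E(F) := \lim_{s\to\infty} s \, \P(B^{is}_{\tau(E)} \in F)$ is the harmonic measure from infinity of the hull $E$. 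So the problem reduces to the key inequality
$$\sum_{S \in \mathcal S(A)} h_S^\alpha\, \omega^\infty_{\hat A}(S) \;\ge\; c' \sum_{S \in \mathcal S(A)} h_S^\alpha\, \omega^\infty_S(S).$$

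To prove this I would exploit the dyadic structure. The half-plane Green function asymptotic $G_\H(is, w) \sim \Im w/(\pi s)$ together with scaling shows that $\omega^\infty_S(S) \asymp h_S$ uniformly over dyadic tiles, so the right-hand side is of order $\sum_j 2^{j(\alpha+1)} N_j(A)$ with $N_j(A) := \#\mathcal S_j(A)$ counting the level-$j$ tiles of $\mathcal S(A)$. I then call a tile \emph{visible} if no other tile of $\mathcal S(A)$ sits above it in a $45^\circ$ Whitney cone, and \emph{hidden} otherwise, assigning each hidden tile a shielding visible tile of strictly larger level. Two facts close the argument: (i) for a visible $S$, Brownian motion from infinity can descend through the unobstructed cone above $S$ to hit $S$ first in $\hat A$, so $\omega^\infty_{\hat A}(S) \asymp \omega^\infty_S(S)$ by a universal-constant estimate on the hitting probability through the cone; (ii) a single visible tile at level $j'$ shields at most $2^{j'-j}$ tiles at each lower level $j$, and their total contribution to the right-hand side is at most $2^{j'-j} \cdot 2^{j(\alpha+1)} = 2^{-(j'-j)\alpha} \cdot 2^{j'(\alpha+1)}$, so summing the resulting geometric series over $j'-j \ge 1$ (which converges because $\alpha > 0$) bounds the total hidden contribution by a constant times the visible one. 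The main obstacle is a clean statement and proof of (i): one must control Brownian hitting probabilities through the Whitney cones uniformly over tiles of all scales and positions, which I would handle by a direct conformal map of the cone to a standard half-strip and the resulting hitting-time estimates.
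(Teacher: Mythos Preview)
Your upper bound is fine and matches the paper's. For the lower bound, however, your visibility approach has a genuine gap in claim (i).

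A $45^\circ$ cone $V$ above $S$ is a wedge of opening angle $\pi/2$. The conformal map $z\mapsto ((z-z_S)e^{-i\pi/4})^2$ sends $V$ to $\H$, the starting point $is$ to $\approx is^2$, and the apex region $S\cap V$ to a set of diameter $\asymp h_S^2$ near the origin. Hence the probability that Brownian motion from $is$ stays in $V$ all the way down to $S$ is $\asymp h_S^2/s^2$, so $\lim_{s\to\infty} s\cdot h_S^2/s^2=0$: the cone-descent event contributes \emph{nothing} to $\omega^\infty_{\hat A}(S)$. The half-strip version you propose gives the same conclusion, since $\log$ sends the wedge to a strip of length $\asymp\log(s/h_S)$ and the probability of crossing it without touching the sides decays like $(h_S/s)^2$. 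Any nontrivial lower bound on $\omega^\infty_{\hat A}(S)$ must therefore allow the path to exit $V$ and re-enter, which forces you to control the geometry of $\hat A$ outside the cone; visibility of $S$ says nothing about that. Indeed, by placing long walls of tiles at many levels just outside the $45^\circ$ cone above $S$ one can make $\omega^\infty_{\hat A}(S)/h_S$ as small as one likes while keeping $S$ visible, so the pointwise assertion $\omega^\infty_{\hat A}(S)\asymp\omega^\infty_S(S)$ is false in general.

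The paper avoids this decomposition entirely. Rather than compare first-hitting probabilities tile by tile, it applies the strong Markov property at $\tau(\hat A)$ and proves the pathwise estimate
\[
\E\Bigl[\sum_{S\in\mathcal S}(\Im B^{z}_{\tau(S)})^\alpha\Bigr]\le C\,(\Im z)^\alpha
\]
for \emph{any} starting point $z$: at each dyadic level the number of distinct squares visited before hitting $\R$ is stochastically dominated by a fixed geometric variable, and the probability of ever reaching level $j+k$ from level $j$ is at most $2^{-k}$, so the sum $\sum_k 2^{-k}\,2^{(j+k)\alpha}$ converges (here one uses $\alpha<1$, not $\alpha>0$). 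Taking $z=B^{is}_{\tau(\hat A)}$ and noting that $\tau(S)\ge\tau(\hat A)$ for every $S\in\mathcal S(A)$ gives $\sum_{S\in\mathcal S(A)} M(S)\le C\,M(\hat A)$ directly, with no need to distinguish visible from hidden tiles.
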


\begin{proof} The right-hand inequality is obvious by (\ref{Munionsubadditive}) and Lemma \ref {MhatMagree}.
By Lemma \ref {MhatMagree}, it is sufficient to prove the result in the case where $A = \hat A$ is the union of squares in ${\cal S}$.

For each $j$ in $\Z$, we will call $\mathcal S_j$ the set of squares that are at height between $2^{j}$ and $2^{j+1}$.
We will say that a square $S = [a 2^j, (a+1) 2^j] \times [2^j, 2^{j+1}]$ in $\mathcal S_j$ is even (respectively odd) if $a$ is even (resp. odd).
We know that
\begin {equation}
\label {eqsum}
\sum_{S \in {\cal S}(A) } M (S) \le  \lim_{s \to \infty} s \E \left( \sum_{S \in {\cal S} (A) } (\Im B^{is}_{\tau(S)})^\alpha \right).
\end {equation}
To bound this expectation, we note that for each $j \in \mathbb Z$, for each even square $S \in \mathcal S_j$, and for each $z \in S$, the probability that a Brownian motion started from $z$
hits the real line before hitting any other even square in $\mathcal S_j$ is bounded from below independently from $z$, $j$ and $S$.
Hence, the strong Markov property implies that the total number of even squares in $\mathcal S_j$ hit by a Brownian motion before hitting the real line is stochastically dominated by a geometric random variable with finite universal mean (independently of the starting point of the Brownian motion) that we call $K/2$. The same is true for odd squares.

Note also that if the starting point $z$ of $B^z$ is in $S \in {\mathcal S}_j$ and if $k \ge 1$, then the probability that the imaginary part of $B$ reaches $2^{j+1+k}$ before $B$ hits the real line is not larger than $2^{-k}$. It follows from the strong Markov property that the expected number of squares in ${\mathcal S}_{j+k+1}$ that $B$ visits before exiting
$\H$ is bounded by $K\times 2^{-k}$.
It follows that for a Brownian motion started from $z$ with $2^j \le \Im (z) \le 2^{j+1}$
$$
\E \left( \sum_{S \in {\cal S} } (\Im B^z_{\tau(S)})^\alpha \right)
 \le
 \sum_{k \le 1} K (2^{j+k+1})^\alpha  +  \sum_{k \ge 2} K 2^{-k} (2^{j+k+1})^\alpha
\le  C 2^{j \alpha} \le C (\Im z)^\alpha
$$
for some universal constant $C$ (bear in mind that $\alpha < 1$ and that for $S \in {\cal S}_{j+k}$, $(\Im B^x_{\tau (S)}) \le 2^{(j+k+1)}$).
If we now apply this statement to the Brownian motion $B^{is}$ after its first hitting time $\tau (A)$ of $A \cup \R = \hat A \cup \R$, we get that for all large $s$ and for some universal positive constant $C'$,
$$
\E \left( ( \Im B^{is}_{\tau (A)})^\alpha \right)
\ge
C'
\E \left( \sum_{S \in {\cal S} (A) } ( \Im B^{is}_{\tau(S)})^\alpha \right) .
$$
Combining this with (\ref {eqsum}) concludes the proof.
\end{proof}

For each square $S = [a 2^j, (a+1) 2^j] \times [2^j, 2^{j+1}]$ of $\mathcal S$, we can define the union $R(S)$ of $S$ with all the squares of ${\mathcal S}$ that lie strictly {\em under} $S$, i.e.
$R(S) = [a 2^j, (a+1) 2^j] \times [0, 2^{j+1}]$.
Note that that scaling shows immediately that for some universal constant $C''$ and for all $S \in \mathcal S$,
 \begin{equation} \label{MR} M(R(S)) = C'' M(S). \end{equation}

\subsection {Estimates for loop-soup clusters}
Let us now use these quantities to study our random loop-ensembles.
Suppose that $\overline \Gamma$ is the conformal loop ensemble corresponding to any given $c \in (0,c_0]$.
Given a hull $A$ we denote by $\tilde A = \tilde A ( A, \overline \Gamma)$ the random hull whose complement is the unbounded component of the set obtained by removing from $\H \setminus A$ all the loops of $\overline \Gamma$ that intersect $A$.  Local finiteness implies that $\tilde A$ is itself
a hull almost surely.

\begin {figure}[htbp]
\begin {center}
\includegraphics [width=6in]{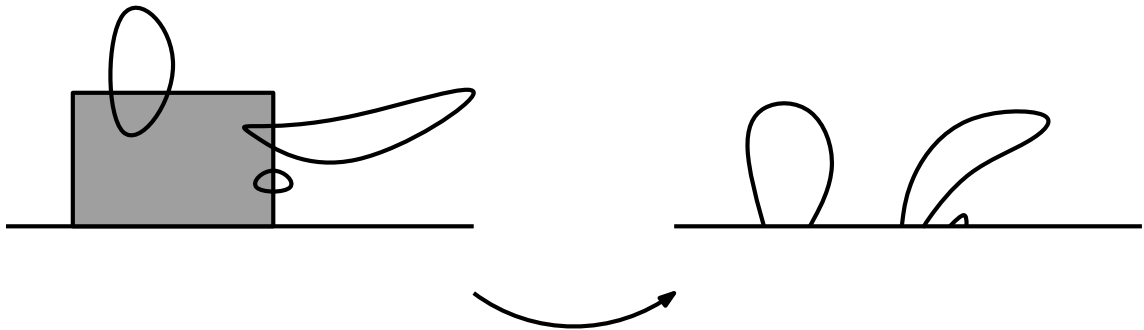}
\caption {Construction of $\Phi_A ( \tilde A)$ (sketch)}
\end {center}
\end {figure}
Now define
$$
N(A) = N_\kappa(A) := \E (M(\Phi_A(\tilde A))).
$$
Recall that if $c < 1$ and $c \le c_0$ then $\mathbb P_c$ defines a CLE$_\kappa$ with $\kappa < 4$. We can therefore reformulate in terms of $c$ a proposition
of \cite{sheffieldwerner1} as follows (this corresponds intuitively to the statement that SLE is unlikely to be very close to a boundary arc when $\kappa < 4$):
\begin{proposition}\cite {sheffieldwerner1} \label{p.swbound}
If $c <1$ and $c \le c_0$, then there is an $\alpha(c)  \in (0,1)$ such that, if we denote by $\diam(A)$ the diameter of
$A$, then we have $\E ( \diam( \tilde A)^{1+\alpha}) < \infty$ for all hulls $A$.
\end{proposition}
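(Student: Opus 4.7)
The plan is to reduce the statement, via a simple geometric covering and union-bound argument, to a single tail estimate for the diameter of CLE$_\kappa$ loops touching a fixed reference ball; this tail estimate is in turn the genuinely SLE-based input supplied by \cite{sheffieldwerner1}, and is available precisely when $\kappa<4$ (equivalently $c<1$).

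\emph{Reduction to a single ball.} For any bounded hull $A$, define
\[
D(A) := \sup\bigl\{\diam(\gamma) : \gamma \in \overline \Gamma,\ \gamma \cap A \neq \emptyset\bigr\}.
\]
By construction $\tilde A$ is contained in the union of $A$ with the loops of $\overline \Gamma$ meeting $A$ together with their interiors, so
\[
\diam(\tilde A) \le \diam(A) + 2\, D(A),
\]
and it suffices to bound $\E(D(A)^{1+\alpha})$. Cover $A$ by a finite collection of closed disks or half-disks $B_1,\ldots,B_N$ of some fixed radius $r$ (half-disks centered on $\R$ where $A$ meets the real axis, disks otherwise). Any loop of $\overline\Gamma$ intersecting $A$ must intersect some $B_i$, hence $D(A)\le \max_i D(B_i)$, and one is reduced to a tail bound on $D(B)$ for a single such ball $B$.

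\emph{Single-ball tail.} By the translation- and scale-invariance of $\mathbb P_c$ (and hence of $\overline\Gamma$) in the upper half plane, $D(B)$ is distributed as $r\, D(B^\star)$ for a unit reference ball $B^\star$. The key input is then the SLE$_\kappa$-excursion tail estimate from \cite{sheffieldwerner1}: for $\kappa<4$ there exists $\beta=\beta(\kappa)>1$ such that
\[
\mathbb P\bigl(\exists \gamma \in \overline \Gamma : \gamma \cap B^\star \neq \emptyset,\ \diam(\gamma) \ge R\bigr) \le C\, R^{-\beta}, \qquad R \ge 1.
\]
The intuition, made rigorous in \cite{sheffieldwerner1}, is that a loop of diameter $\ge R$ touching $B^\star$ corresponds, via the pinned loop measure description of CLE$_\kappa$ recalled in Section~\ref{S.3}, to an SLE$_\kappa$ excursion that approaches a point at distance of order $R$ from its endpoints; for $\kappa<4$, SLE$_\kappa$ is ``repelled'' from the boundary with a visit exponent strictly greater than $1$, in sharp contrast with the $\kappa=4$ case where the exponent collapses to $1$. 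Integrating this tail yields $\E(D(B^\star)^{1+\alpha})<\infty$ for any $\alpha \in (0,\min(\beta(\kappa)-1,1))$, and by scaling the same holds uniformly for any ball of the above type.

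\emph{Main obstacle and conclusion.} Inserting the single-ball estimate back into the finite covering of $A$ (a union bound costs only the constant factor $N(A)$) and using $\diam(\tilde A) \le \diam(A) + 2D(A)$ yields the desired $(1+\alpha)$-moment bound, completing the proof. The whole argument hinges on the SLE boundary-visit estimate with $\beta(\kappa)>1$: this is the non-elementary ingredient borrowed from \cite{sheffieldwerner1}, whose own proof involves explicit SLE martingale / one-point function computations that we do not reproduce here. The threshold $\kappa=4$ (equivalently $c=1$) is sharp for this estimate, and this is precisely why the borderline case $c=1$ must be excluded from the present proposition and treated separately later.
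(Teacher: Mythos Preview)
The paper does not prove this proposition at all: it is stated with the citation \cite{sheffieldwerner1} and used as a black box (``We can therefore reformulate in terms of $c$ a proposition of \cite{sheffieldwerner1} as follows\ldots''). So there is no proof in the paper to compare with; you have supplied a sketch of how the argument in the companion paper presumably goes.

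Your reduction is essentially sound, and you correctly locate the non-elementary step: the SLE$_\kappa$ boundary-repulsion estimate for $\kappa<4$ from \cite{sheffieldwerner1}, which yields a $(1+\alpha)$-moment for a reference set and fails exactly at $\kappa=4$. Two minor points. First, the finite covering by balls is unnecessary: since $A$ is a bounded hull, it is contained in a single half-disk $B$ of radius $r$ centered on $\R$, whence $D(A)\le D(B)$ and scaling gives $D(B)\stackrel{d}{=} r\,D(B^\star)$ directly. Second, your scaling claim ``$D(B)$ is distributed as $r\,D(B^\star)$'' is only literally correct for half-disks centered on $\R$; for an interior disk the law also depends on its height, so the sentence as written is slightly wrong. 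Using a single enclosing half-disk sidesteps this entirely. With that simplification your outline is a correct reduction to the cited input.
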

Throughout the remainder of this subsection, we will suppose that $c_1 < 1$ and $c_1 \le c_0$, and that this $c_1$ is fixed. We then choose $\alpha = \alpha (c_1)$, and we define $M$ and $N$ using this value of $\alpha$.
We will then let $c$ vary in $[0, c_1]$. It follows from the previous proposition that for all $c \le c_1$,
$$N(A) = \E ( M ( \Phi_A (\tilde A))) \le \E ( M ( \tilde A)) \le \E ( \diam(\tilde A)^{1+\alpha}) < \infty .$$
Here is a more elaborate consequence of the previous proposition:
\begin{corollary} \label{rectangleNbound}
Consider $c \leq c_1$ and $\alpha = \alpha(c_1)$ fixed as above.  For any hull $A$ and any $S \in \mathcal S$, if $A_S = A \cap S$, then
$$\E( M ( \Phi_A (\tilde A_S))) \le C(c) M(S),$$ for some constant $C(c)$ depending only on $c$ and
tending to zero as $c \to 0$.
\end{corollary}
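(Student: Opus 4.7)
My strategy is to reduce to a reference unit square by scale and translation invariance, use the monotonicity identity (\ref{Mtriple}) to replace $\Phi_A$ by $\Phi_{A_S}$ so that only the ``excess'' of $\widetilde{A_S}$ over $A_S$ contributes, bound the result via Proposition~\ref{p.swbound}, and deduce the $c \to 0$ decay from the Poisson structure of the loop-soup.

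By horizontal translation and dyadic scale invariance of $\Prob_c$ on $\H$, and by the scaling formula (\ref{Mscaling}), both sides of the target inequality are covariant under $z \mapsto 2^j z + t$, since the tilde operation commutes with such similarities in distribution and $M$ scales by the factor $2^{j(1+\alpha)}$. It therefore suffices to prove the estimate for the fixed reference square $S = S_0 := [0,1] \times [1,2]$, in which case $M(S_0)$ is a positive universal constant and the claim reduces to showing $\E(M(\Phi_A(\widetilde{A_S}))) \le C(c)$ uniformly over hulls $A$ with $A_S = A \cap S_0$, with $C(c) \to 0$ as $c \to 0$.

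Next, I would apply (\ref{Mtriple}) with $A_1 = A_S$ and $A_2$ a hull such that $A_1 \,\underline\cup\, A_2 = A$, giving
\begin{equation*}
M(\Phi_A(\widetilde{A_S})) \;\le\; M(\Phi_{A_S}(\widetilde{A_S})).
\end{equation*}
The key feature of $\Phi_{A_S}$ is that it sends $A_S$ itself onto a segment of $\R$, so $\Phi_{A_S}(\widetilde{A_S})$ only sees the image of $\widetilde{A_S}\setminus A_S$ and vanishes when no loop of $\overline \Gamma$ meets $A_S$. Combining the height-decreasing property of $\Phi_{A_S}$ with the elementary estimate $M(B) \le C(1+\diam(B))^{1+\alpha}$ for hulls $B$ meeting $S_0$ (which follows from (\ref{hcappsi}) and (\ref{Msubset}) by comparison with a half-disk of radius $O(1+\diam(B))$) yields the bound $\E(M(\Phi_A(\widetilde{A_S}))) \le C\,\E((1+\diam(\widetilde{A_S}))^{1+\alpha})$, whose finiteness, uniformly in $A_S \subset S_0$, is provided by Proposition~\ref{p.swbound}.

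The \emph{main obstacle} is sharpening this finiteness to $C(c) \to 0$, since the right-hand side does not a priori vanish with $c$. For this I would refine via subadditivity (\ref{Munionsubadditive}) to decompose $M(\Phi_{A_S}(\widetilde{A_S}))$ into contributions from the individual CLE loops $\gamma \in \overline \Gamma$ touching $A_S$, each bounded by $C(1+\diam(\gamma))^{1+\alpha}$; it then suffices to show $\E\sum_{\gamma \cap A_S \ne \emptyset}(1+\diam(\gamma))^{1+\alpha} \to 0$ as $c \to 0$. Here I would use the Poissonian description of $\overline\Gamma$ in terms of Brownian loop-soup clusters of intensity $c\mu$: the expected number of Brownian loops of diameter at least $\delta$ touching $A_S$ is linear in $c$ (and finite since $A_S$ is compactly contained in $\H$), while the contribution of clusters built from smaller loops is controlled using the BK-type bound of Lemma~\ref{exponentialcrossingdecay}, whose geometric decay becomes stronger as $c$ decreases. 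Combined with Proposition~\ref{p.swbound} as a uniformly integrable envelope on $c \in [0, c_1]$, dominated convergence then delivers the desired decay $C(c) \to 0$ and completes the proof.
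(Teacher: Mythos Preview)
Your overall strategy---reduce by scaling, obtain a finite bound via Proposition~\ref{p.swbound}, and then use a uniform-integrability/dominated-convergence argument for the decay as $c\to 0$---is the right shape, and indeed your final clause (``Proposition~\ref{p.swbound} as a uniformly integrable envelope on $[0,c_1]$, dominated convergence then delivers the desired decay'') matches the paper's proof almost verbatim. The finiteness part is also fine, though your detour through~\eqref{Mtriple} to replace $\Phi_A$ by $\Phi_{A_S}$ is unnecessary: the paper simply uses $M(\Phi_A(\widetilde{A_S}))\le M(\widetilde{A_S})\le M(\widetilde R)\le \diam(\widetilde R)^{1+\alpha}$ with $R=R(S_0)$. (There is also a minor issue that $A_S=A\cap S$ need not itself be a hull, so $\Phi_{A_S}$ would require some care to define.)

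The genuine gap is in your mechanism for the $c\to 0$ decay. Decomposing over CLE loops $\gamma$ touching $A_S$ and bounding each contribution by $C(1+\diam(\gamma))^{1+\alpha}$ does not work: infinitely many small CLE loops can touch a given hull (they accumulate along the carpet with diameters tending to zero), so the additive ``$1+$'' makes the sum diverge. Even with a sharper per-loop bound, controlling $\E\sum_\gamma \diam(\gamma)^{1+\alpha}$ uniformly and showing it tends to zero would require substantial new estimates you have not supplied; invoking ``the expected number of Brownian loops of diameter $\ge\delta$ is linear in $c$'' does not directly control CLE loops, which are cluster boundaries rather than individual Brownian loops.

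The paper avoids this decomposition entirely. It fixes $\epsilon>0$ and introduces the event $E(c)$ that some loop-soup cluster intersecting $R$ has radius greater than $\epsilon^2$. On $E(c)^c$, every point of $\widetilde{A_S}\setminus A$ lies within distance $\epsilon^2$ of $A$; since $\Phi_A$ collapses $A$ to $\R$, standard distortion (the square-root behaviour of $\Phi_A$ near $\partial A$) forces the \emph{height} of $\Phi_A(\widetilde{A_S})$ to be at most $\epsilon$. Combined with $\widetilde{A_S}\subset[-1,3]\times[0,3]$ (so its harmonic measure from infinity is uniformly bounded), this yields $M(\Phi_A(\widetilde{A_S}))\le C\epsilon^\alpha$ deterministically on $E(c)^c$. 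On $E(c)$ one bounds by the $c_1$-envelope $\diam(\widetilde R^{c_1})^{1+\alpha}$, and since $\P(E(c))\to 0$ as $c\to 0$ one concludes by dominated convergence and then lets $\epsilon\to 0$. The key idea you are missing is this direct height estimate for $\Phi_A(\widetilde{A_S})$ on the good event, which bypasses any summation over individual loops.
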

\begin{proof}

By scaling, it suffices to consider the case where $S=[0,1] \times [1,2]$ and hence $R=R(S)$ is the rectangle $[0,1] \times [0,2]$.
Proposition \ref{p.swbound} then implies that
$$
\E (M(\Phi_A (\tilde A_S))) \le \E ( M (\tilde A_S )) \le \E ( M ( \tilde R)) \le \E ( \diam( \tilde R)^{1+\alpha} ) < \infty.
$$
We want to prove that
$\E (M(\Phi_A (\tilde A_S)))$ tends to zero uniformly with respect to $A$ as $c \to 0$.
Let $E(c)$ denote the event that some loop-soup cluster (in the loop-soup of intensity $c$) intersecting the rectangle $R$ has radius more than $\epsilon^2$.
When $E(c)$ does not hold, then standard distortion estimates yield an $\epsilon$ bound on the height of (i.e., the largest imaginary part of an element of) $\Phi_A (\tilde A_S)$. But we then also know that $\tilde A_S$ is a subset of $[-1, 3] \times [0,3]$, so that a Brownian motion started from $is$ will hit $\tilde A_S$ before hitting $A \cup \R$ with a probability bounded by $s^{-1}$ times some universal constant $C$. Hence, unless $E(c)$ holds, we have $M ( \Phi_A ( \tilde A_S )) \le C \eps^\alpha$.

Summing up, we get that
$$ \E ( M ( \Phi_A ( \tilde A_S))) \le C \eps^\alpha + \E ( 1_{E(c)} M ( \tilde A_S))
\le C \eps^\alpha + \E ( 1_{E(c)} \diam( \tilde R^{c_1})^{1+\alpha} ), $$
 where $\tilde R^{c_1}$ denotes the $\tilde R$ corresponding to a larger loop-soup of intensity $c_1$ that we couple to the loop-soup of intensity $c$.
But $\P(E(c)) \to 0$ as $c \to 0$ and $\E ( \diam( \tilde R^{c_1})^{1+\alpha} ) < \infty$, so that if we take $c$ sufficiently small,
$$ \E ( M ( \Phi_A ( \tilde A_S))) \le  2 C \eps^\alpha $$
for all hulls $A$. This completes our proof.
\end{proof}

We are now ready to prove our final Lemma:
\begin{lemma} \label{generalNbound}
For $c \le c_1$, there exists a finite constant $C_1 = C_1 (c)$ such that for
all hulls $A$, $N(A) \leq C_1(c) M(A)$. Furthermore, we can take $C_1(c)$ in such a way that
$C_1(c)$ tends to zero as $c \to 0$.
\end{lemma}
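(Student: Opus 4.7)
The plan is to decompose the hull $A$ via the dyadic squares $\mathcal S(A)$, apply Corollary \ref{rectangleNbound} to each piece $A_S = A \cap S$, and assemble the estimates via Lemma \ref{mhatsum}.

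The structural observation I will use is that $\tilde A$ should coincide with the hull closure (in $\H$) of $\bigcup_{S \in \mathcal S(A)} \tilde A_S$. This is because the loops of $\overline \Gamma$ intersecting $A$ are precisely the union over $S$ of the loops intersecting $A_S$, so that both $\tilde A$ and the hull closure of $\bigcup_S \tilde A_S$ are generated by the same set (namely $A$ together with all loops hitting $A$). Applying the map $\Phi_A$, with the abuse of notation of the paper, this will give that $\Phi_A(\tilde A)$ is the hull closure in $\H$ of $\bigcup_S \Phi_A(\tilde A_S \setminus A)$, and similarly that each $\Phi_A(\tilde A_S)$ is the hull closure of $\Phi_A(\tilde A_S \setminus A)$.

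The key step is to prove the identity
$$ M\bigl(\Phi_A(\tilde A)\bigr) \,=\, M\Bigl(\bigcup_{S \in \mathcal S(A)} \Phi_A(\tilde A_S \setminus A)\Bigr), $$
together with its pointwise analogue $M(\Phi_A(\tilde A_S)) = M(\Phi_A(\tilde A_S \setminus A))$. Both should follow from the same Brownian hitting argument: for $B^{is}$ with $s$ large, the first hitting time of $K \cup \R$, where $K$ is the hull closure in $\H$ of a bounded set $X \subset \H$, coincides with the first hitting time of $\overline X \cup \R$. Indeed, the boundary of $K$ in $\H$ is contained in $\overline X$, since any point of $\H \setminus \overline X$ has an open neighborhood disjoint from $X$, hence lies in the interior of one of the connected components of $\H \setminus X$, and is therefore not on $\partial K$. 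Thus $B^{is}$ cannot enter $K$ before it touches $\overline X$, so the hitting time, the position $B_\tau$, and a fortiori the value of $M$, are unchanged by taking the hull closure. This is the step that will require the most care.

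Once these identities are in place, combining them with the subadditivity (\ref{Munionsubadditive}) and the monotonicity $M(\Phi_A(\tilde A_S \setminus A)) \le M(\Phi_A(\tilde A_S))$, then taking expectations and applying Corollary \ref{rectangleNbound} and Lemma \ref{mhatsum}, will give
$$ N(A) \,\le\, \sum_{S \in \mathcal S(A)} \E\bigl[M(\Phi_A(\tilde A_S))\bigr] \,\le\, C(c) \sum_{S \in \mathcal S(A)} M(S) \,\le\, \frac{C(c)}{C'}\, M(A). $$
The lemma will therefore hold with $C_1(c) := C(c)/C'$, which tends to zero as $c \to 0$ since $C(c)$ does. (Note that the sum over $S$ converges since $\sum_S M(S) \le M(A)/C' < \infty$ by Lemma \ref{mhatsum}, which also justifies the interchange of sum and expectation.)
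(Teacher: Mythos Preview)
Your proof is correct and follows essentially the same route as the paper's: decompose $A$ into the pieces $A_S = A \cap S$ for $S \in \mathcal S(A)$, bound $M(\Phi_A(\tilde A))$ by $\sum_S M(\Phi_A(\tilde A_S))$ via subadditivity, then invoke Corollary~\ref{rectangleNbound} and Lemma~\ref{mhatsum} to arrive at $N(A) \le (C(c)/C')\,M(A)$. The paper's argument is the same chain of inequalities, written more tersely.

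The one place you are more explicit than the paper is in handling the hull-closure bookkeeping. The paper simply writes the equalities $\Phi_A(\tilde A) = \Phi_A(\underline\cup_S \tilde A_S) = \underline\cup_S \Phi_A(\tilde A_S)$ without comment, relying on the (admitted) abuse of notation for $\Phi_A$ applied to hulls meeting $A$. You instead pass through the pre-hull sets $\Phi_A(\tilde A_S \setminus A)$ and justify via a Brownian hitting argument that $M$ is unchanged by taking the hull closure (since the Brownian motion from infinity first meets the hull on its boundary, which lies in the closure of the generating set). That argument is sound and makes the step rigorous; it is not a different approach, just a careful unpacking of what the paper leaves implicit.
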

\begin{proof}
Putting together the estimates in
Lemma \ref{mhatsum} and Corollary \ref {rectangleNbound}
we have
\begin {eqnarray*}
 N(A)  & = &  \E ( M ( \Phi_A ( \tilde A))) \\
 & =  & \E ( M ( \Phi_A (\underline \cup_{S \in {\cal S}(A)} \tilde A_S )))
\\
& = & \E ( M ( \underline \cup_{S \in {\cal S}(A)} \Phi_A (\tilde A_S)))  \\
& \le &  \E ( \sum_{S \in {\cal S} (A)} M ( \Phi_A (\tilde A_S)))\\
&  \le&   \sum_{S \in {\cal S}(A)}  C(c) M(S) \\
& \le &  C(c) (C')^{-1} M(A)
\end {eqnarray*}
and $C(c) \to 0$ when $c \to 0+$, whereas $C'$ does not depend on $c$.
\end{proof}

As we will now see, this property implies that for $c = c_0$, $N$ is necessarily infinite for all positive $\alpha$ (i.e. it shows that the size of clusters at the critical point can not decay too fast), and this will enable us to conclude the proof of Proposition \ref{criticaltheorem} in the manner outlined after its statement:

\begin {proof}
Suppose that $c_0 < 1$. We choose $c_1=c_0$ (and $\alpha=\alpha (c_1)$). We take $c'$ to be positive but small enough
so that the product of the corresponding constants $C_1(c_0)$ and $C_1 (c')$ in Lemma \ref{generalNbound} is less than $1$.
We will view the loop-soup $\Gamma$ with intensity $c_0 + c'$ as the superposition of a loop-soup $\Gamma_0$ with intensity $c_0$ and an independent loop-soup
$\Gamma'$ with intensity $c'$ i.e. we will construct $\overline \Gamma$ with
 via the loop-soup cluster boundaries in $\overline \Gamma_0$ and $\overline \Gamma'$.

Now let us begin with a given hull $A$ (say the semi-disk of radius $1$ around the origin).
Suppose that $\Gamma$ contains a chain of loops that join $A$ to the line $L_R= \{ z \in \H \ : \  \Im (z) =R  \}$. This implies that one can find a finite chain $\overline \gamma_1, \ldots, \overline \gamma_n$ (chain means that two consecutive loops intersect) of loops
in $\overline \Gamma_0 \cup \overline \Gamma'$ with $\overline \gamma_1 \cap A \not= \emptyset$ and $\overline \gamma_n \cap L_R \not= \emptyset$.
Since the loops in $\overline \Gamma_0$ (resp. $\overline \Gamma'$) are disjoint, it follows that the loops $\overline \gamma_1, \ldots , \overline \gamma_n$
alternatively belong to $\overline \Gamma_0$ and $\overline \Gamma'$.

Consider the loops of $\overline \Gamma_0$ that intersect $A$. Let us consider $A_1$ the hull generated by the union of  $A$ with these loops (this is the $\tilde A$ associated to the loop-soup $\Gamma_0$). Recall that the expected value of $M( A_1)$ is finite because $\alpha= \alpha( c_1)$.
Then add to $A_1$, the loops of $\overline \Gamma'$ that intersect $A_1$. This generates a hull $B_2$ (which is the $\tilde A_1$ associated to the loop-soup $\Gamma'$).
Then, add to $B_2$ the loops of $\overline \Gamma_0$ that intersect $B_2$. Note that in fact, one basically adds only the loops of $\overline \Gamma_0$ that intersect
$A_1 \setminus A$ (the other ones were already in $A_1$) in order to define a new hull $B_3$, and continue iteratively.
Let $F$ be the limiting set obtained. We can also describe this sort of exploration
by writing for all $n \ge 1$, $A_{n+1} = \Phi_{A_n}(\tilde A_n)$, where $\tilde A_n$ is alternately constructed from $A_n$ using a loop-soup with intensity $c_0$  or
$c'$ as $n$ is even or odd.
The expected value of $M(A_n)$ decays exponentially, which implies (Borel-Cantelli) that $M(A_n)$ almost surely decays eventually faster than some exponential
sequence.

We note that if $A$ is a hull such that for all $z \in A$, $\Im (z) \le 1$, we clearly have $\hcap (A) \le M(A)$. On the other hand, we know that if $A$ is a hull such that
there exists $z \in A$ with $\Im (z) \ge 1$, then $M(A) \ge c$ for some absolute constant $c$. Hence, we see that almost surely, for all large enough $n$, $\hcap (A_n) \le M(A_n)$, which implies that almost surely
 $\sum_n \hcap(A_n) < \infty$. But the half-plane capacity behaves additively under conformal iterations, so that in fact
$ \hcap (F) = \sum_{n \ge 0} \hcap (A_n)$.
Hence, for large enough $R$, the probability that $F$ does not intersect $L_R$ is positive, and
 there is a positive probability that no chain of loops in $\Gamma$ joins $A$ to $L_R$.
It follows that $\mathbb P_{c_0 +c'} ( \overline {\mathcal C} =  \{ \overline \HH \} ) < 1$ and
 Proposition~\ref{p.satisfiesconformal} would then imply that $c_0  +c' \le c_0$ which is impossible.
This therefore implies that $c_0 \ge 1$. As explained after the proposition statement, we also know that $c_0$ can not be strictly larger than $1$, so that we can finally
conclude that $c_0= 1$.
\end {proof}

As explained at the beginning of this section, this completes the proof of our main results, Theorems \ref{loopsoupsatisfiesaxioms} and
\ref{kappacorrespondence}.

\medbreak

\noindent\begin{tabular}{lll}
{\it Department of Mathematics MIT, 2-180} & & {\it Laboratoire de Math\'ematiques}\\
{\it 77 Massachusetts Avenue} & & {\it Universit\'e Paris-Sud 11, b\^{a}timent 425}\\
{\it Cambridge, MA 02139-4307} & & {\it 91405 Orsay Cedex, France} \\
sheffield@math.mit.edu & & wendelin.werner@math.u-psud.fr
         \end{tabular}

\end{document}